\documentclass[a4paper,10pt]{article}

\usepackage[applemac]{inputenc}   
\usepackage[T1]{fontenc}      
\usepackage{geometry}         
\usepackage[english]{babel}
\usepackage{amsthm}
\usepackage{amsmath}
\usepackage{amsfonts}
\usepackage{amssymb}
\usepackage[all]{xy}
\usepackage[pdftex]{hyperref}
\newtheorem{theoreme}{Theorem}[section]
\newtheorem{proposition}[theoreme]{Proposition}

\newtheorem{definition}[theoreme]{Definition}
\newtheorem{corollary}[theoreme]{Corollary}
\theoremstyle{definition} \newtheorem{remark}[theoreme]{Remark}
\theoremstyle{definition} \newtheorem{example}[theoreme]{Example}

\newcommand{\fldr}[1]{\stackrel{#1}\rightarrow}
\newcommand{\fldrs}[1]{\stackrel{#1}\twoheadrightarrow}
\newcommand{\fldri}[1]{\stackrel{#1}\hookrightarrow}
\newcommand{\lfldr}[1]{\stackrel{#1}\longrightarrow}

\newcommand{\dpart}[1]{\displaystyle{\left.\frac{\partial}{\partial{#1}}
\right|_{{#1} = 0}}}
\newcommand{\dpartt}[1]{\displaystyle{\frac{\partial}{\partial{#1}}}}
\newcommand{\somme}[2]{\displaystyle\sum_{#1}^{#2}}
\def\R {{\mathbb{R}}}
\def\N {{\mathbb{N}}}
\def\aa {{\mathfrak{a}}}
\def\gg {{\mathfrak{g}}}

\def\ggc{{\hat{\mathfrak{g}}}}

\def\xx {{\mathfrak{x}}}
\def\conj { {\rhd} }

\title{The local integration of Leibniz algebras}
\date{}
\author{Simon Covez}

\begin{document}

\maketitle

\begin{abstract}
This article gives a local answer to the coquecigrue problem. Hereby we mean the problem, formulated by J-L. Loday in \cite{LodayEns}, is that of finding a generalization of the Lie's third theorem for Leibniz algebra. That is, we search a manifold provided with an algebraic structure which generalizes the structure of a (local) Lie group, and such that the tangent space at a distinguished point is a Leibniz algebra structure. Moreover, when the Leibniz algebra is a Lie algebra, we want that the integrating manifold is a Lie group. In his article \cite{Kinyon}, M.K. Kinyon solves the particular case of split Leibniz algebras. He shows, in particular, that the tangent space at the neutral element of a Lie rack is provided with a Leibniz algebra structure. Hence it seemed reasonable to think that Lie racks give a solution to the coquecigrue problem, but M.K. Kinyon also showed that a Lie algebra can be integrated into a Lie rack which is not a Lie group. Therefore, we have to specify inside the category of Lie racks, which objects are the coquecigrues. In this article we give a local solution to this problem. We show that every Leibniz algebra becomes integrated into a \textit{local augmented Lie rack}. The proof is inspired by E. Cartan's proof of Lie's third theorem, and, viewing a Leibniz algebra as a central extension by some center, proceeds by integrating explicitely the corresponding Leibniz 2-cocycle into a rack 2-cocycle. This proof gives us a way to construct local augmented Lie racks which integrate Leibniz algebras, and this article ends with an example of the integration of a non split Leibniz algebra in dimension $5$.   
\end{abstract}
\section*{Introduction}
The main result of this article is a local answer to the \textit{coquecigrue problem}. By coquecigrue problem, we mean the problem of integrating Leibniz algebras. This question was formulated by J.-L. Loday in \cite{LodayEns} and consists in finding a generalisation of the Lie's third theorem for Leibniz algebras. This theorem establishes that for every Lie algebra $\gg$, there exists a Lie group $G$ such that its tangent space at $1$ is provided with a structure of Lie algebra isomorphic to $\gg$. Leibniz algebras are generalisations of Lie algebras, they are their non-commutative analogues. Precisely, a \textit{(left) Leibniz algebra} (over $\mathbb{R}$) is an $\mathbb{R}$-vector space $\gg$ provided with a bilinear map $[-,-]: \gg \times \gg \to \gg$ called the bracket and satisfying the \textit{(left) Leibniz identity} for all $x,y$ and $z$ in $\gg$
$$
[x,[y,z]] = [[x,y],z] + [y,[x,z]]
$$ 
Hence, a natural question is to know if, for every Leibniz algebra, there exists a manifold provided with an algebraic structure generalizing the group structure, and such that the tangent space at a distinguished point, called $1$, can be provided with a Leibniz algebra structure isomorphic to the given Leibniz algebra. As we want this integration to be the generalization of the Lie algebra case, we also require that, when the Leibniz algebra is a Lie algebra, the integrating manifold is a Lie group.\par
One result about this question was given by M.K. Kinyon in \cite{Kinyon}. In his article he solves the particular case of \textit{split} Leibniz algebras. A Leibniz algebras is split when it is isomorphic to the \textit{demisemidirect product} of a Lie algebra and a module over this Lie algebra, that is isomorphic to $\gg \oplus \aa$ as vector space and where the bracket is given by $[(x,a),(y,b)] = ([x,y],x.a)$. In this case he shows that the algebraic structure which answers the problem is the structure of a \textit{digroup}. A digroup is a set  with two binary operation $\vdash$ and $\dashv$, a neutral element $1$ and some compatibility conditions. More precisely, Kinyon shows that a digroup structure induces a \textit{pointed rack} structure (pointed in $1$), and it is this algebraic structure which gives the tangent space at $1$ a Leibniz algebra structure. Of course, not every Leibniz algebra is isomorphic to a demisemidirect product, so we have to find a more general structure to solve the problem. One should think that the right structure is that of a pointed rack, but M.K. Kinyon showed in \cite{Kinyon} that the second condition (Lie algebra becomes integrated into a Lie group) is not always fulfilled. Thus we have to specify the structure inside the category of pointed racks.\par
In this article we don't give a complete answer to the coquecigrue problem in the sense that we only construct a \textit{local} algebraic structure and not a global one. Indeed, to define an algebraic structure on a tangent space at a given point on a manifold, we just need an algebraic structure in a neighborhood of this point. We will show in chapter 3 that a local answer to the problem is given by the pointed augmented local racks which are abelian extensions of a Lie group by an anti-symmetric module.\par
Our approach to the problem is similar to the one given by E. Cartan in \cite{ECartan1}. The main idea comes from the fact that we know the Lie's first and second theorem on a class of Lie algebras. For example, every abelian Lie algebra or every Lie subalgebra of the Lie algebra $End(V)$ is integrable (using the Lie's first theorem). More precisely, let $\gg$ be a Lie algebra, $Z(\gg)$ its center and $\gg_0$ the quotient of $\gg$ by $Z(\gg)$. The Lie algebra $Z(\gg)$ is abelian and $\gg_0$ is a Lie subalgebra of $End(\gg)$, thus there exist Lie groups, respectively $Z(\gg)$ and $G_0$, which integrate these Lie algebras. As a vector space, $\gg$ is isomorphic to the direct sum $\gg_0 \oplus Z(\gg)$,  thus the tangent space at $(1,0)$ of the manifold $G_0 \times Z(\gg)$ is isomorphic to $\gg$. As a Lie algebra, $\gg$ is isomorphic to the central extension $\gg_0 \oplus_{\omega} Z(\gg)$ where $\omega$ is a Lie $2$-cocycle on $\gg_0$ with coefficients in $Z(\gg)$. That is, the bracket on $\gg_0 \oplus_{\omega} Z(\gg)$ is defined by
\begin{eqnarray}
[(x,a),(y,b)] = \big([x,y],\omega(x,y)\big) \label{crochet dans g  Z(g)}
\end{eqnarray}
where $\omega$ is an anti-symmetric bilinear form on $\gg_0$ with value on $Z(\gg)$ which satisfies the Lie algebra cocycle identity
$$
\omega([x,y],z) - \omega(x,[y,z]) + \omega(y,[x,z]) = 0
$$           
Hence we have to find a group structure on $G_0 \times Z(\gg)$ which gives this Lie algebra structure on the tangent space at $(1,0)$. It is clear that the bracket \eqref{crochet dans g  Z(g)} is completely determined by the bracket on $\gg_0$ and the cocycle $\omega$. Hence, the only thing we have to understand is $\omega$. The Lie algebra $\gg$ is a central extension of $\gg_0$ by $Z(\gg)$, thus we can hope that the Lie group which integrates $\gg$ should be a central extension of $G_0$ by $Z(\gg)$. To follow this idea, we have to find a group $2$-cocycle on $G_0$ with coefficients in $Z(\gg)$. In this case, the group structure on $G_0 \times Z(\gg)$ is given by
\begin{eqnarray}
(g,a).(h,b) = \big(gh, a + b + f(g,h)\big) \label{produit dans G x Z(g)}
\end{eqnarray}
where $f$ is a map from $G \times G \to Z(\gg)$ vanishing on $(1,g)$ and $(g,1)$ and satisfying the group cocycle identity
$$
f(h,k) - f(gh,k) + f(g,hk) - f(g,h) = 0
$$   
With such a cocycle, the conjugation in the group is given by the formula
\begin{eqnarray}
(g,a).(h,b).(g,a)^{-1} = \big(ghg^{-1}, a + f(g,h) - f(ghg^{-1},g)\big) \label{conjugaison dans G x Z(g)}
\end{eqnarray}
and by imposing a smoothness condition on $f$ in a neighborhood of $1$, we can differentiate this formula twice, and obtain a bracket on $\gg_0 \oplus Z(\gg)$ defined by
$$
[(x,a),(y,b)] = \big([x,y], D^2f(x,y)\big)
$$ 
where $D^2f(x,y) = d^2f(1,1)((x,0),(0,y)) - d^2f(1,1)((y,0),(0,x))$. Thus, if $D^2f(x,y)$ equals $\omega(x,y)$, we recover the bracket \eqref{crochet dans g  Z(g)}. Hence, if we associate to $\omega$ a group cocyle $f$ satisfying some smoothness conditions and such that $D^2f = \omega$, then our integration problem is solved. This can be done in two steps. The first one consists in finding a local Lie group cocycle defined around $1$. Precisely, we want a map $f$ defined on a subset of $G_0 \times G_0$ containing $(1,1)$ with values in $Z(\gg)$ which satisfies the local group cocycle identity (cf. \cite{VanEst} for a definition of local group). We can construct explicitely such a local group cocycle. This construction is the following one (cf. Lemma 5.2 in \cite{Neeb2}) : \par
Let $V$ be an open convex 0-neighborhood in $\gg_0$ and $\phi:V \to G_0$ a chart of $G_0$ with $\phi(0)=1$ and $d\phi(0) = id_{\gg_0}$. For all $(g,h) \in \phi(V) \times \phi(V)$ such that $gh \in \phi(V)$ let us define $f(g,h) \in Z(\gg)$ by the formula
$$
f(g,h) = \int_{\gamma_{g,h}} \omega^{inv}
$$
where $\omega^{inv} \in \Omega^2(G_0,Z(\gg))$ is the invariant differential form on $G_0$ associated to $\omega$ and $\gamma_{g,h}$ is the smooth singular $2$-chain defined by
$$
\gamma_{g,h}(t,s) = \phi\Bigg(t\bigg(\phi^{-1}\Big(g\phi \big(s\phi^{-1}(h)\big)\Big)\bigg) + s\bigg(\phi^{-1}\Big(g\phi \big((1-t)\phi^{-1}(h)\big)\Big)\bigg)\Bigg)
$$
The formula  for $f$ defines a smooth function such that $D^2f(x,y) =\omega(x,y)$. We now only have to check whether $f$ satisfies the local group cocycle identity. Let $(g,h,k) \in \phi(V)^3$ such that $gh,hk$ and $ghk$ are in $\phi(V)$. We have
\begin{align*}
f(h,k) - f(gh,k) + f(g,hk) - f(g,h) &=  \int_{\gamma_{h,k}} \omega^{inv} -  \int_{\gamma_{gh,k}} \omega^{inv} +  \int_{\gamma_{g,hk}} \omega^{inv} -  \int_{\gamma_{g,h}} \omega^{inv} \\
						&= \int_{\partial\gamma_{g,h,k}} \omega^{inv}
\end{align*}
where $\gamma_{g,h,k}$ is a smooth singular $3$-chain in $\phi(V)$ such that $\partial\gamma_{g,h,k} = g\gamma_{h,k} - \gamma_{gh,k} + \gamma_{g,hk} - \gamma_{g,h}$ (such a chain exists because $\phi(V)$ is homeomorphic to the convex open subset $V$ of $\gg_0$). Thus
\begin{align*}
f(h,k) - f(gh,k) + f(g,hk) - f(g,h) &= \int_{\partial\gamma_{g,h,k}} \omega^{inv} = \int_{\gamma_{g,h,k}} d_{dR}\omega^{inv} = 0
\end{align*}
because $\omega^{inv}$ is a closed $2$-form. Hence, we have associated to $\omega$ a local group $2$-cocycle, smooth in a neighborhood in $1$, and such that $D^2f(x,y) = \omega(x,y)$. Thus we can define a local Lie group structure on $G_0 \times Z(\gg)$ by setting
$$
(g,a)(h,b) = \big(gh, a + g.b + f(g,h)\big),
$$
and the tangent space at $(1,0)$ of this local Lie group is isomorphic to $\gg$. If we want a global structure, we have to extend this local cocycle to the whole group $G_0$. First P.A. Smith (\cite{Smith1,Smith2}), then W.T. Van Est (\cite{VanEst2,VanEst3}) have shown that it is precisely this enlargement which may meet an obstruction coming from both $\pi_2(G_0)$ and $\pi_1(G_0)$. In finite dimension, $\pi_2(G_0) =  0$, thus there is no obstruction to integrate Lie algebras. This equality is no longer true in infinite dimension, hence this obstruction prevents the integration of infinite dimensional Lie algebras into global Lie groups (cf. \cite{Neeb1,Neeb2}).\par
To integrate Leibniz algebras into pointed racks, we follow a similar approach. In this context, we use the fact that we know how to integrate any (finite dimensional) Lie subalgebra of $End(V)$ for $V$ a vector space. In a similar way as the Lie algebra case, we associate to any Leibniz algebra an abelian extension of a Lie algebra $\gg_0$ by an anti-symmetric representation $Z_L(\gg)$. As we have the theorem for Lie algebras, we can integrate $\gg_0$ and $Z_L(\gg)$ into the Lie groups $G_0$ and $Z_L(\gg)$, and, using the Lie's second theorem, $Z_L(\gg)$ is a $G_0$-module. Then, the main difficulty becomes the integration of the Leibniz cocycle into a local Lie rack cocycle. In chapter $3$ we explain how to solve this problem. We make a similar construction as in the Lie algebra case, but in this context, there are several difficulties which appear. One of them is that our cocycle is not anti-symmetric, so we can't consider the equivariant form associated to it and integrate this form. To solve this problem, we will use Proposition \ref{isomorphisme entre CLn et CLn-1} which, in particular, establishes an isomorphism from the $2$-nd cohomology group of a Leibniz algebra $\gg$ with coefficients in an anti-symmetric representation $\aa^a$ to the $1$-st cohomology group of $\gg$ with coefficients in the symmetric representation $Hom(\gg,\aa)^s$. In this way, we get a $1$-form that we can now integrate. Another difficulty is to specify on which domain this $1$-form should be integrated. In the Lie algebra case, we integrate over a $2$-simplex and the cocycle identity is verified by integrating over a $3$-simplex, whereas in our context we will replace the $2$-simplex by the $2$-cube and the $3$-simplex by a $3$-cube.
\vskip 0.2cm
\noindent
Let us describe the content of the article section-wise.
\subsection*{Section 1: Leibniz algebras}
This whole section, except Proposition \ref{isomorphisme entre CLn et CLn-1}, is based on \cite{LodayEns,Loday,LodayCyclic}.  We first give the basic definitions we need about Leibniz algebras. Unlike J.-L. Loday and T. Pirashvili, who work with right Leibniz algebras, we study left Leibniz algebras. Hence, we have to translate all the definitions needed into our context. As we have seen above, we translate our integration problem into a cohomological problem, thus we need a cohomology theory for Leibniz algebras and, a fortiori, a notion of representation. We take the definition of a representation of a Leibniz algebra given by J.-L. Loday and T. Pirashvili in \cite{Loday}. We end this section with a fondamental result (Proposition \ref{isomorphisme entre CLn et CLn-1}). This proposition establishes an isomorphism of cochain complexes from $CL^n(\gg,\aa^a)$ to $CL^{n-1}(\gg,Hom(\gg,\aa)^s)$. The important fact in this result is the transfer from an anti-symmetric representation to a symmetric one. This will be useful when we will have to associate a local Lie rack $2$-cocycle to a Leibniz $2$-cocycle.
\subsection*{Section 2: Lie racks}
The notion of rack comes from topology, in particular, the theory of invariants of knots and links (cf. for example \cite{FennRourke}). It is M.K. Kinyon in \cite{Kinyon} who was the first to  link racks to Leibniz algebras. The idea of linking these two structures comes from the case of Lie groups and Lie algebras and in particular from the construction of the bracket using the conjugation. Indeed, a way to define a bracket on the tangent space at $1$ of a Lie group is to differentiate the conjugation morphism twice. Let $G$ a Lie group, the conjugation is the group morphism $c: G \to Aut(G)$ defined by $c_g(h) = ghg^{-1}$. If we differentiate this expression with respect to the variable $h$ at $1$, we obtain a Lie group morphism $Ad:G \to Aut(\gg)$. We can still derive this morphism at $1$ to obtain a linear map $ad: \gg \to End(\gg)$. Then, we are allowed to define a bracket $[-,-]$ on $\gg$ by setting $[x,y] = ad(x)(y)$. We can show that this bracket satisfies the left Leibniz identity, and that this identity is induced by the equality $c_g(c_h(k)) = c_{c_g(h)}(c_g(k))$. Thus, if we denote $c_g(h)$ by $g \conj h$, the only properties we use to define a Lie bracket on $\gg$ are
\begin{enumerate}
\item $g \conj: G \to G$ is a bijection for all $g \in G$.
\item $g \conj (h \conj k) = (g \conj h) \conj (g \conj k)$ for all $g,h,k \in G$
\item $g \conj 1= 1$ and $1 \conj g = g$ for all $g \in G$. 
\end{enumerate}
Hence, we call \textit{(left) rack}, a set provided with a binary operation $\conj$ satisfying the first and the second condition. A rack is called \textit{pointed} if there exists an element $1$ which satisfies the third condition. We begin this chapter by giving definitions and examples, for this we follow \cite{FennRourke}. They work with right racks, hence, as in the Leibniz algebra case, we translate the definitions to left racks. In particular, we give the most important example called \textit{(pointed) augmented rack}. 
This example presents similarities with crossed modules of groups, and in this case, the rack structure is induced by a group action.\par
As in the group case, we want to construct a pointed rack associated to a Leibniz algebra using an abelian extension. Hence, we need a cohomology theory where the second cohomology group corresponds to the extension classes of a rack by a module. We take the definitions given by N. Andruskiewitsch and M. Graña in \cite{AndruskiewitschGrana}.
\par
At the end of this section, we give the definitions of local rack cohomology and (local) Lie rack cohomology.    
\subsection*{Section 3: Lie racks and Leibniz algebras}
This section is the heart of this article. It gives the local solution for the coquecigrue problem. To our knowledge, all the results in this chapter are new, except Proposition \ref{l'espace tangent d'un rack est Leibniz} due to M.K. Kinyon (\cite{Kinyon}). First, we recall the link between (local) Lie racks and Leibniz algebras explained by M.K. Kinyon in \cite{Kinyon} (Proposition \ref{l'espace tangent d'un rack est Leibniz}). Then, we study the passage from smooth $As(X)$-modules to Leibniz representations (Proposition \ref{Lien entre smooth As(X)-modules et Leibniz representations}) and (local) Lie rack cohomology to Leibniz cohomology. We define a morphism from the (local) Lie rack cohomology of a rack $X$ with coefficients in a $As(X)$-module $A^s$ (resp. $A^a$) to the Leibniz cohomology of the Leibniz algebra associated to $X$ with coefficients in $\aa^s = T_0A$ (resp. $\aa^a$) (Proposition \ref{morphisme de la cohomologie de rack vers la cohomologie de Leibniz}).
\par 
The end of this section (section $3.4$ to $3.7$) is on the integration of Leibniz algebras into local Lie racks. We use the same approach as E. Cartan for the Lie groups case. That is, for every Leibniz algebra, we consider the abelian extension by the left center and integrate it. This extension is caracterized by a $2$-cocycle, and we construct (Proposition \ref{I2 inverse à gauche de D2}) a local Lie rack $2$-cocycle integrating it by an explicit construction similar to the one explained in the Lie group case. This construction is summarized in our main theorem (Theorem \ref{theoreme principal}). We remark that the constructed $2$-cocycle has more structure (Proposition \ref{identité fantôme de I^2}). That is, the rack cocycle identity is induced by another one. This other identity permits us to provide our constructed local Lie rack with a structure of  augmented local Lie rack (Proposition \ref{G_0 x a$ est un rack augmenté}). We end this section with an example of the integration of a non split Leibniz algebra in dimension $5$.     
\section{Leibniz algebras}
As it is written in the introduction, we work with left Leibniz algebras instead of right Leibniz algebras. The main reason comes from the fact that M.K. Kinyon works in this context in his article \cite{Kinyon}. Indeed, this article is our starting point of the integration problem for Leibniz algebras. Thus, we have chosen to work in this context.\par
A \textit{(left) Leibniz algebra (over $\R$)} is a vector space $\gg$ (over $\R$) provided with a bracket $[-,-]: \gg \otimes \gg \to \gg$, which satisfies the \textit{left Leibniz identity}
$$
[x,[y,z]] = [[x,y],z] + [y,[x,z]].
$$  
Remark that an equivalent way to define a left Leibniz algebra is to say that, for all $x \in \gg$, $[x,-]$ is a derivation for the bracket $[-,-]$. The first example of a Leibniz algebra is a Lie algebra. Indeed, if the bracket is anti-symmetric, then the Leibniz identity is equivalent to the Jacobi identity. Hence, we have a functor $inc: Lie \to Leib$. This functor has a left adjoint $(-)_{Lie}:Leib \to Lie$ which is defined on the objects by $\gg_{Lie} = \gg / \gg_{ann}$, where $\gg_{ann}$ is the two-sided ideal of $\gg$ generated by the set $\{[x,x] \in \gg \, | \, x \in \gg\}$. We can remark that there are other ways to construct a Lie algebra from a Leibniz algebra. One is to quotient $\gg$ by the \textit{left center} $Z_L(\gg) = \{x \in \gg \, | \, [x,-] = 0\}$, but this construction is not functorial.
\vskip 0.2cm
To define a cohomology theory for Leibniz algebras, we need a notion of representation of such algebraic structure. As we work with left Leibniz algebra, we have to translate the definition given by J.L. Loday and T. Pirashvili in their article \cite{Loday}. In our context, a \textit{representation} over a Leibniz algebra $\gg$, becomes a vector space $M$ provided with two linear maps $[-,-]_L:\gg \otimes M \to M$ and $[-,-]_R:M \otimes \gg \to \gg$, satisfying the following three axioms
\begin{align*}
  [x,[y,m]_{L}]_{L} &= [[x,y],m]_{L} + [y,[x,m]_{L}]_{L} \ \ \tag{$LLM$}\\
  \lbrack x,[m,y]_{R}]_{L} &= [[x,m]_{L},y]_{R} + [m,[x,y]]_{R} \ \ \tag{$LML$}\\
  \lbrack m,[x,y]]_{R} &= [[m,x]_{R},y]_{R} + [x,[m,y]_{R}]_{L} \ \ \tag{$MLL$}
\end{align*}  
Recall that, for a Lie algebra $\gg$, a representation of $\gg$ is a vector space $M$ provided with a linear map $[-,-]:\gg \otimes M \to M$ satisfying $[[x,y],m] = [x,[y,m]] - [y,[x,m]]$. A Lie algebra is a Leibniz algebra, hence we want that a Lie representation $M$ of a Lie algebra $\gg$, is a Leibniz representation of $\gg$. We have two canonical choices for putting a Leibniz representation structure on $M$. One possibility is by setting $[-,-]_L = [-,-]$ and $[-,-]_R = -[-,-]$, and a second one is by setting $[-,-]_L = [-,-]$ and $[-,-]_R = 0$. These Leibniz representations are examples of particular Leibniz representations. The first one is an example of a \textit{symmetric} representation, and the second one is an example of an \textit{anti-symmetric} representation. A symmetric representation is a Leibniz representation where $[-,-]_L = -[-,-]_R$ and an anti-symmetric representation is a Leibniz representation where $[-,-]_R = 0$. A Leibniz representation which is symmetric and anti-symmetric is called \textit{trivial}.
\vskip 0.2cm
Now, we are ready to define a cohomology theory for Leibniz algebras. The existence of a cohomology (and homology) theory for these algebras is one of the main motivation for studying them because, restricted to Lie algebras, this theory gives us new invariants (cf. \cite{LodayEns}). For $\gg$ a Leibniz algebra and $M$ a representation of $\gg$, we define a cochain complex $\{CL^n(\gg,M),dL^n\}_{n \in \N}$ by setting
$$
CL^n(\gg,M) = Hom(\gg^{\otimes n},M)
$$   
and 
$$
\begin{array}{lll}
dL^{n}\omega(x_{0},\dots,x_{n}) & = \ \ \displaystyle{\sum_{i=0}^{n-1}} (-1)^{i}[x_{i},\omega(x_{0},\dots,\hat{x_{i}},\dots,x_{n})]_{L} +  (-1)^{n-1}[\omega(x_{0},\dots,x_{n-1}),x_{n}]_{R} \\ & \ \ \ \ + \displaystyle{\sum_{0\leq i < j \leq n}} (-1)^{i+1} \omega(x_{0},\dots,x_{j-1},[x_{i},x_{j}],x_{j+1},\dots,x_{n})
\end{array}
$$
To prove that $dL^{n+1} \circ dL^n = 0$, we use \textit{Cartan's formulas}. These formulas are described in \cite{Loday} in the right Leibniz algebra context, but we can adapt them easily in our context.\par
Like for many algebraic structures, the second cohomology group of a Leibniz algebra $\gg$ with coefficients in a representation $M$ is in bijection with the set of equivalence classes of abelian extensions of $\gg$ by $M$ (cf. \cite{Loday}). \textit{An abelian extension} of a Leibniz algebra $\gg$ by $M$ is a Leibniz algebra $\ggc$ such that, $M \fldri{i} \ggc \fldrs{p} \gg$ is a short exact sequence of Leibniz algebra (where $M$ is considered as an abelian Leibniz algebra) and the representation structure of $M$ is compatible with the representation structure induced by this short exact sequence. That is, $[m,x]_R = i^{-1}[i(m),s(x)]$ and $[x,m]_L = i^{-1}[s(x),i(m)]$ where $s$ is a section of $p$ and the bracket is that of $\ggc$ (of course, we have to justify that this representation structure of $\gg$ on $M$ induced by the short exact sequence doesn't depend on $s$, but we deduce it easily from the fact that the difference of two sections of $p$ is in $i(M)$).
\par
There are canonical abelian extensions associated to a Leibniz algebra. The one we will use to integrate Leibniz algebras is the \textit{abelian extension by the left center}
$$
Z_L(\gg) \fldri{i} \gg \fldrs{p} \gg_0
$$
where $\gg_0 := \gg / Z_L(\gg)$. This is an extension of a Lie algebra by an anti-symmetric representation. In a sense, a symmetric representation is closer to a Lie representation than to an anti-symmetric representation. Hence, it is convenient to pass from a anti-symmetric representation to a symmetric representation. Let $\gg$ be a Lie algebra and $M$ a Lie representation of $\gg$, then we define a Lie representation structure on $Hom(\gg,M)$ by setting
$$
(x.\alpha)(y) := x.(\alpha(y)) - \alpha([x,y])
$$
for all $x,y \in \gg$ and $\alpha \in Hom(\gg,M)$. The following proposition establishes an isomorphism from $HL^n(\gg,M^a)$ to $HL^{n-1}(\gg,Hom(\gg,M)^s)$, where $M^a$ (resp. $Hom(\gg,M)^s$) means that $M$ (resp. $Hom(\gg,M)$) is provided with a anti-symmetric (resp. symmetric) $\gg$-representation structure.
\begin{proposition} \label{isomorphisme entre CLn et CLn-1}
Let $\gg$ be a Lie algebra and $M$ a Lie representation of $\gg$. We have an isomorphism of cochain complexes 
$$
CL^{n}(\gg,M^{a}) \fldr{\tau^{n}} CL^{n-1}(\gg,Hom(\gg,M)^{s})
$$
given by $\omega \mapsto \tau^{n}(\omega)$ where $\tau^{n}(\omega)(x_{1},\dots,x_{n-1})(x_{n}) = \omega(x_{1},\dots,x_{n})$.
\end{proposition}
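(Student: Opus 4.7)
The plan is to observe that $\tau^n$ is a linear bijection at each degree, and then verify the cochain map condition $\tau^{n+1} \circ dL^n = dL^{n-1} \circ \tau^n$ by a direct calculation. The bijectivity is immediate from the tensor--hom adjunction $\mathrm{Hom}(\gg^{\otimes n}, M) \simeq \mathrm{Hom}(\gg^{\otimes (n-1)}, \mathrm{Hom}(\gg, M))$; the inverse of $\tau^n$ sends $\alpha$ to $(y_1, \ldots, y_n) \mapsto \alpha(y_1, \ldots, y_{n-1})(y_n)$.

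For compatibility with differentials, I would fix $\omega \in CL^n(\gg, M^a)$ and evaluate both sides on a tuple $(x_0, \ldots, x_{n-1})$ and then at $x_n$. The left-hand side equals $dL^n\omega(x_0, \ldots, x_n)$; because $M^a$ is anti-symmetric the $[-,-]_R$-term of $dL^n$ vanishes, leaving only the $L$-bracket sum $\sum_{i=0}^{n-1}(-1)^i[x_i, \omega(x_0, \ldots, \hat{x_i}, \ldots, x_n)]$ together with the bracket-replacement sum. For the right-hand side, set $\alpha := \tau^n\omega$ and expand the three summands of $dL^{n-1}\alpha$. The key ingredient is the defining formula $(x.\alpha)(y) = x.\alpha(y) - \alpha([x,y])$, which splits each $L$-bracket contribution of $dL^{n-1}\alpha$ into an ``action'' part and a ``correction'' part. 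The $R$-bracket term is first converted using the symmetry of $\mathrm{Hom}(\gg, M)^s$, namely $[\alpha, x_{n-1}]_R = -x_{n-1}.\alpha$, and then expanded in the same way.

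The matching is now term by term. The ``action'' contributions $(-1)^i[x_i, \omega(x_0, \ldots, \hat{x_i}, \ldots, x_n)]$ for $i = 0, \ldots, n-2$ produced by the $L$-sum of $dL^{n-1}\alpha$, together with the contribution $(-1)^{n-1}[x_{n-1}, \omega(x_0, \ldots, x_{n-2}, x_n)]$ coming from the $R$-term (after the sign flip from symmetry), reassemble precisely the full $L$-sum on the LHS. The ``correction'' pieces $(-1)^{i+1}\omega(x_0, \ldots, \hat{x_i}, \ldots, x_{n-1}, [x_i, x_n])$ collected from both the $L$-sum and the $R$-term account for the entire $j = n$ subsum of the bracket-replacement sum on the LHS. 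Finally, the third summand of $dL^{n-1}\alpha$ evaluated at $x_n$ coincides with the $j \leq n-1$ subsum of the bracket-replacement sum on the LHS.

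The only real obstacle is sign bookkeeping. One needs the prefactor $(-1)^{n-2}$ of the $R$-term, multiplied by the minus sign coming from the identity $[\alpha, y]_R = -y.\alpha$ in the symmetric representation, to produce exactly the sign $(-1)^{n-1}$ expected for the missing $i = n-1$ index of the LHS $L$-sum, and exactly the sign $(-1)^{(n-1)+1}$ expected for the $i = n-1, j = n$ term of the LHS bracket-replacement sum. A short check confirms both coincidences, after which the remaining assembly is purely mechanical.
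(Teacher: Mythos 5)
Your proposal is correct and follows essentially the same route as the paper's proof: bijectivity via the tensor--hom adjunction, then a direct expansion of $dL^{n-1}(\tau^{n}\omega)$ using $(x.\alpha)(y)=x.(\alpha(y))-\alpha([x,y])$ and the conversion $[\alpha,x_{n-1}]_{R}=-x_{n-1}.\alpha$ in the symmetric module, with the same term-by-term reassembly of the $L$-sum, the $j=n$ correction terms, and the bracket-replacement sum. The sign bookkeeping you describe matches the paper's computation exactly.
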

\begin{proof} This morphism is clearly an isomorphism $\forall n \geq 0$. Moreover, we have
\begin{align*} \displaystyle
dL\tau^{n}(\omega)(x_{0},\dots,x_{n-1})(x_{n}) &= \sum_{i=0}^{n-2} (-1)^{i}[x_{i},\tau^{n}(\omega)(x_{0},\dots,\widehat{x_{i}},\dots,x_{n-1})](x_{n}) \\ 
								& +  (-1)^{n-1}[x_{n-1},\tau^{n}(\omega)(x_{0},\dots,x_{n-2})](x_{n})\\ 
								& + \displaystyle{\sum_{0\leq i < j \leq n-1}} (-1)^{i+1} \tau^{n}(\omega)(x_{0},\dots,x_{j-1},[x_{i},x_{j}],x_{j+1},\dots,x_{n-1})(x_{n})\\
							         & = \sum_{i=0}^{n-1} (-1)^{i}([x_{i},\omega(x_{0},\dots,\hat{x_{i}},\dots,x_{n-1},x_{n})] - \omega(x_{0},\dots,\widehat{x_{i}},\dots,x_{n-1},[x_{i},x_{n}]))\\
							         & + \displaystyle{\sum_{0\leq i < j \leq n-1}} (-1)^{i+1} \omega(x_{0},\dots,x_{j-1},[x_{i},x_{j}],x_{j+1},\dots,x_{n-1},x_{n})
							         \end{align*}
\begin{align*}
dL\tau^{n}(\omega)(x_{0},\dots,x_{n-1})(x_{n})	 & = \sum_{i=0}^{n-1} (-1)^{i}[x_{i},\omega(x_{0},\dots,\widehat{x_{i}},\dots,x_{n-1},x_{n})]\\
							         & + \sum_{0\leq i < j \leq n} (-1)^{i+1} \omega(x_{0},\dots,x_{j-1},[x_{i},x_{j}],x_{j+1},\dots,x_{n-1},x_{n}) \\
							         & = dL\omega(x_{0},\dots,x_{n-1},x_{n})\\
							         & = \tau^{n+1}(dL\omega)(x_{0},\dots,x_{n-1})(x_{n})
\end{align*}
Hence $\{\tau^{n}\}_{n \geq 0}$ is a morphism of cochain complexes.
\end{proof}
\section{Lie racks}
\subsection{Definitions and examples}
Like in the Leibniz algebra case, we can define left racks and right racks. Because we have made the choice to work with left Leibniz algebras, we take the definition of left racks. A \textit{(left) rack} is a set $X$ provided with a product $\conj: X \times X \to X$, which satisfies the \textit{left rack identity}, that is for all $x,y,z \in X$ :
$$
x \conj (y \conj z) = (x \conj y) \conj (x \conj z),
$$
and such that $x \conj_{-} : X \to X$ is a bijection for all $x \in X$. A rack is said to be \textit{pointed} if there exists an element $1 \in X$, called the neutral element, which satisfies $1 \conj x = x$ and $x \conj 1 = 1$ for all $x \in X$. A \textit{rack morphism} is a map $f:X \to Y$ satisfying $f(x \conj y) = f(x) \conj f(y)$, and a \textit{pointed rack morphism} is a rack morphism $f$ such that $f(1)=1$. \par
The first example of a rack is a group provided with the conjugation. Indeed, let $G$ be a group, we define a rack product $\conj$ on $G$ by setting $g \conj h = ghg^{-1}$ for all $g,h \in G$. Clearly, $g \conj_{-}$ is a bijection with inverse $g^{-1} \conj_{-}$ and, an easy computation shows that the rack identity is satisfied. Hence, we have a functor $Conj:Group \to Rack$. This functor has a left adjoint $As:Rack \to Group$ defined on the objects by $As(X) = F(X) / <\{xyx^{-1}(x\conj y^{-1}) \, | \, x,y \in X\}>$ where $F(X)$ is the free group generated by $X$, and $<\{xyx^{-1}(x\conj y^{-1}) \, | \, x,y \in X\}>$ is the normal subgroup generated by $\{xyx^{-1}(x\conj y^{-1}) \, | \, x,y \in X\}$. We can remark that $Conj(G)$ is a pointed rack. Indeed, we have $1 \conj g = g$ and $g \conj 1 = 1$ for all $g \in G$, where $1$ is the neutral element for the group product. Hence, $Conj$ is a functor from $Group$ to $PointedRack$. This functor has a left adjoint $As_p: PointedRack \to Group$, defined on the objects by $As_p(X) = As(X) / <\{[1]\}>$, where $<\{[1]\}>$ is the subgroup of $As(X)$ generated by the class $[1] \in As(X)$.
\par
A second example, and maybe the most important, is the example of augmented racks. An \textit{augmented rack} is the data of a group $G$, a $G$-set $X$, and a map $X \fldr{p} G$ satisfying the \textit{augmentation identity}, that is for all $g \in G$ and $x \in X$
$$
p(g.x) = gp(x)g^{-1}.
$$
Then, we define a rack structure on $X$ by setting $x \conj y = p(x).y$. If there exists an element $1 \in X$ such that $p(1) = 1$ and $g.1 = 1$ for all $g \in G$, then the augmented rack $X \fldr{p} G$ is said to be \textit{pointed}, and the associated rack $(X,\conj)$ is pointed. We can remark that crossed modules and precrossed modules of groups are examples of augmented racks.
\subsection{Pointed rack cohomology}
To define a pointed rack cohomology theory, we need a good notion of pointed rack module. In this article, we take the definition given by N. Andruskiewitsch and M. Graña in \cite{AndruskiewitschGrana}.  Let $X$ be a pointed rack, an \textit{X-module} is an abelian group $A$, provided with two families of homomorphisms of the abelian group $A$, $(\phi_{x,y})_{x,y \in X}$ and $(\psi_{x,y})_{x,y \in X}$, satisfying the following axioms
\begin{enumerate}
\item[$(M_{0})$] $\phi_{x,y}$ is an isomorphism.
\item[$(M_{1})$] $\phi_{x,y \conj z} \circ \phi_{y,z} = \phi_{x \conj y, x \conj z} \circ \phi_{x,z}$
\item[$(M_{2})$] $\phi_{x,y \conj z} \circ \psi_{y,z} = \psi_{x \conj y, x \conj z} \circ \phi_{x,y}$
\item[$(M_{3})$] $\psi_{x,y \conj z} = \phi_{x \conj y, x \conj z} \circ \psi_{x,z} + \psi_{x \conj y, x \conj z} \circ \psi_{x,y}$ 
\item[$(M_{4})$] $\phi_{1,y} = id_{A} \ \ \forall y \in X$ and $\psi_{x,1} = 0 \ \ \forall x \in X$
\end{enumerate}
\begin{remark}
There is a more general definition of (pointed) rack module given by N. Jackson in \cite{Jackson2}, but we don't need this degree of generality. This definition of pointed rack module coincides with the definition of homogeneous pointed rack module given in \cite{Jackson2}. 
\end{remark}
For example, there are two canonical $X$-module structures on an $As_p(X)$-module. Indeed, let $A$ be a $As_p(X)$-module, that is an abelian group provided with a group morphism $\rho:As_p(X) \to Aut(A)$, the first $X$-module structure, called \textit{symmetric}, that we can define on $A$ is given for all $x,y \in X$ by
\begin{align*}
\phi_{x,y}(a) &= \rho_x(a) \, \text{ and } \, \psi_{x,y}(a) = a - \rho_{x \conj y}(a).
\end{align*}
The second, called \textit{anti-symmetric}, is given for all $x,y \in X$ by
\begin{align*}
\phi_{x,y}(a) &= \rho_x(a) \, \text{ and } \, \psi_{x,y}(a) = 0.
\end{align*}
\par
With this definition of module, N. Andruskiewitsch and M. Graña define a cohomology theory for pointed racks. For $X$ a pointed rack and $A$ a $X$-module, they define a cochain complex $\{CR^n(X,A),d_R^n\}_{n \in \N}$ by setting
$$
CR^n(X,A) = \{f:X^n \to A \, | \, f(x_1,\dots,1,\dots,x_n) = 0\}
$$
and
$$
\begin{array}{ll}
d_R^nf(x_1,\dots,x_{n+1}) = \\ \somme{i = 1}{n} (-1)^{i-1}\big(\phi_{x_1 \conj \dots \conj x_i,x_1 \conj \dots \conj \widehat{x_i} \conj \dots \conj x_{n+1}}(f(x_1,\dots,\widehat{x_i},\dots,x_{n+1})) - f(x_1,\dots,x_i \conj x_{i+1},\dots,x_i \conj x_{n+1})\big) \\ \qquad + (-1)^{n}\psi_{x_1 \conj  \dots \conj x_n,x_1 \conj  \dots \conj x_{n-1} \conj x_{n+1}}(f(x_1,\dots,x_n))
\end{array}
$$
This complex is the same as the one defined in \cite{Jackson2}, but in the left rack context. Adapting the proof given by N. Jackson in \cite{Jackson2}, one easily sees that the second cohomology group $HR^2(X,A)$ is in bijection with the set of equivalence classes of abelian extensions of a pointed rack $X$ by a $X$-module $A$. An \textit{abelian extension} of a pointed rack $X$ by a $X$-module $A$ is a surjective pointed rack homomorphism $E \fldrs{p} X$ which satisfies the following axioms
\begin{enumerate}
\item[$(E_0)$] for all $x \in X$, there is a simply transitively right action of $A$ on $p^{-1}(x)$.
\item[$(E_1)$] for all $u \in p^{-1}(x), v \in p^{-1}(y), a \in A$, we have $(u.a) \conj v = (u \conj v).\psi_{x,y}(a)$.
\item[$(E_2)$] for all $u \in p^{-1}(x), v \in p^{-1}(y), a \in A$, we have $u \conj (v.a) = (u \conj v).\phi_{x,y}(a)$. 
\end{enumerate}
and two extensions $E_1 \fldrs{p_1} X$ , $E_2 \fldrs{p_2} X$ are called \textit{equivalent}, if there exists a pointed rack isomorphism $E_1 \fldr{\theta} E_2$ which satisfies the following axioms
\begin{enumerate}
\item $p_2 \circ \theta = p_1$.
\item for all $x \in X, u \in p^{-1}(x), a \in A$, we have $\theta(u.a) = \theta(u).a$.
\end{enumerate}
\subsection{Lie racks}
To generalize Lie groups, we need a pointed rack provided with a differentiable structure compatible with the algebraic structure. This is the notion of Lie racks. A \textit{Lie rack} is a smooth manifold $X$ provided with a pointed rack structure such that the product $\conj$ is smooth, and such that for all $x \in X \ \ c_x$ is a diffeomorphism. We will see in section $3$ that the tangent space at the neutral element of a Lie rack is provided with a Leibniz algebra structure.
\vskip0.2cm
Let $X$ be a Lie rack, a $X$-module $A$ is said \textit{smooth} if $A$ is a abelian Lie group, and if $\phi:X \times X \times A \to A$ and $\psi:X \times X \times A \to A$ are smooth. Then we can define a cohomology theory for Lie racks with values in a smooth module. For this we define a cochain complex $\{CR_s^n(X,A),d_R^n\}_{n \in \N}$ where $CR_s^n(X,A)$ is the set of functions $f:X^n \to A$ which are smooth in a neighborhood of $(1,\dots,1) \in X^n$ and such that $f(x_1,\dots,1,\dots,x_n) = 0$ for all $x_1,\dots,x_n \in X$. The formula for the differential $d_R$ is the same as the one defined previously. We will see that a Lie rack cocycle (respectively a coboundary) derives itself in a Leibniz algebra cocycle (respectively coboundary).
\subsection{Local racks}
To define a Lie algebra structure on the tangent space at the neutral element of a Lie group, we can remark that we only use the local Lie group structure in the neighborhood of $1$. We will see that this remark remains true for Lie racks and Leibniz algebras.
\vskip0.2cm
A \textit{local rack} is a set $X$ provided with a product $\conj$ defined on a subset $\Omega$ of $X \times X$ with values in $X$, and such that the following axioms are satisfied:
\begin{enumerate}
\item If $(x,y),(x,z),(y,z),(x,y \conj z)$ and $(x \conj y,x \conj z) \in \Omega$, then $x \conj (y \conj z) = (x \conj y) \conj (x \conj z)$.
\item If $(x,y),(x,z) \in \Omega$ and $x \conj y = x \conj z$, then $y = z$.
\end{enumerate}
A local rack is said to be \textit{pointed} if there is a element $1 \in X$ such that $1 \conj x$ and $x \conj 1$ are defined for all $x \in X$ and respectively equal to $x$ and $1$. We called this element the \textit{neutral element}. Then a \textit{local Lie rack} is a pointed local rack $(X,\Omega,1)$ where $X$ is a smooth manifold, $\Omega$ is a open subset of $X$, and $\conj: \Omega \to X$ is smooth. For example, every Lie rack open subset containing the neutral element is a local Lie rack. Given such a local Lie rack, we can define a associated cohomology theory.
\par
Let $X$ be a Lie rack, $U$ a subset of $X$ containing the neutral element $1$ and $A$ a smooth $X$-module. We define for all $n \in \N, \, CR_s^n(U,A)$ as the set of maps $f:U_{n-loc} \to A$, smooth in a neighborhood of the neutral element, and such that $f(x_1,\dots,1,\dots,x_n) = 0$. If $A$ is not anti-symmetric, then $U_{n-loc}$ is the subset of elements $(x_1,\dots,x_n)$ of $X \times U^{n-1}$ satisfying $x_{i_1} \conj \dots \conj x_{i_j} \in U$, for all $i_1 < \dots < i_j, 2 \leq j \leq n$. If $A$ is anti-symmetric, $U_{n-loc}$ is the subset of elements $(x_1,\dots,x_n)$ of $X^{n-1} \times U$ satisfying $x_{i_1} \conj \dots \conj x_{i_j} \conj x_n \in U$, for all $i_1 < \dots < i_j < n, 1 \leq j \leq n-1$. One easily checks that the formula for the differential $d_R$ allows us to define a cochain complex $\{CR_s^n(U,A),d_R^n\}_{n \in \N}$. Then we define \textit{U-local Lie rack cohomology of X with coefficients in A} as the cohomology of the cochain complex $\{CR_s^n(U,A),d_R^n\}_{n \in \N}$.
\section{Lie racks and Leibniz algebras}
\subsection{From Lie racks to Leibniz algebras}
In this section we recall how a Leibniz algebra is canonically associated to a Lie rack.
\begin{proposition}[\cite{Kinyon}] \label{l'espace tangent d'un rack est Leibniz}
Let $X$ be a Lie rack, then $T_{1}X$ is provided with a Leibniz algebra structure.
\end{proposition}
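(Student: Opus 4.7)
The plan is to imitate exactly the Lie group construction sketched in the introduction, replacing the conjugation morphism of a Lie group by the inner automorphism $c_x := x\conj -$ of the rack. First I would observe that, by hypothesis, for every $x \in X$ the map $c_x : X \to X$ is a smooth bijection with smooth inverse (this is built into the Lie rack axioms), and it fixes the neutral element since $x \conj 1 = 1$. Therefore its differential at $1$ is a linear automorphism $Ad(x) := d(c_x)_1 \in GL(T_1 X)$. The map $Ad : X \to GL(T_1 X)$ is smooth and satisfies $Ad(1) = id_{T_1 X}$, because $1 \conj y = y$ means $c_1 = id_X$. Differentiating once more at $1$ gives a linear map $ad := d(Ad)_1 : T_1 X \to End(T_1 X)$, and I define the bracket on $T_1 X$ by
$$
[x,y] := ad(x)(y).
$$

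The heart of the proof is to derive the left Leibniz identity from the rack identity $x \conj (y \conj z) = (x \conj y) \conj (x \conj z)$, rewritten as
$$
c_x \circ c_y = c_{x \conj y} \circ c_x.
$$
First I differentiate this identity with respect to $z$ at $z = 1$, using $c_y(1) = c_x(1) = 1$. The chain rule yields
$$
Ad(x) \circ Ad(y) = Ad(x \conj y) \circ Ad(x),
$$
equivalently $Ad(x \conj y) = Ad(x) \circ Ad(y) \circ Ad(x)^{-1}$. Next I differentiate this identity with respect to $y$ at $y = 1$; since $x \conj 1 = 1$, the differential of $y \mapsto x \conj y$ at $1$ is $Ad(x)$, and the left hand side contributes $Ad(x) \circ ad(Y) \circ Ad(x)^{-1}$ while the right hand side gives $ad(Ad(x)(Y))$, so that
$$
ad(Ad(x)(Y)) = Ad(x) \circ ad(Y) \circ Ad(x)^{-1}.
$$
Finally I differentiate this last equation with respect to $x$ at $x = 1$. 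On the right, using $Ad(1) = id$ and $d(Ad)_1 = ad$, together with $d(Ad(\,\cdot\,)^{-1})_1 = -ad$, the derivative is the commutator $ad(X) \circ ad(Y) - ad(Y) \circ ad(X)$. On the left, the chain rule and $Ad(1)(Y) = Y$ give $ad([X,Y])$. Evaluating both sides on a vector $Z \in T_1 X$ and rearranging produces
$$
[X,[Y,Z]] = [[X,Y],Z] + [Y,[X,Z]],
$$
which is exactly the left Leibniz identity.

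The main obstacle is purely bookkeeping: one must keep track of the order in which the three arguments $x, y, z$ are differentiated, use at each stage that both the rack operation and its partial maps $c_x$ are smooth, and exploit $c_x(1) = 1$ and $c_1 = id_X$ to make the chain rule collapse neatly. Everything else is formal, and no choices need to be made beyond the ones forced by the analogy with the Lie group case recalled at the beginning of the paper.
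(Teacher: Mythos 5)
Your proposal is correct and follows exactly the paper's construction: define $Ad_x = T_1c_x$, observe that the rack identity gives $c_{x\conj y} = c_x\circ c_y\circ c_x^{-1}$ so that $Ad$ is a rack morphism into $Conj(GL(T_1X))$, set $ad = D_1Ad$ and $[u,v]=ad(u)(v)$, and obtain the Leibniz identity by differentiating in each variable. The paper delegates the final three-fold differentiation to Kinyon's article, whereas you carry it out explicitly (and correctly, up to a harmless swap of the labels ``left hand side''/``right hand side'' in the second differentiation step).
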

Let $X$ be a Lie rack and denote by $\mathfrak{x}$ the tangent space to $X$ at $1$. The Leibniz algebra structure on $T_1X$ is constructed as follow. The conjugation $\conj$ induces for all $x \in X$ an automorphism of Lie racks $c_{x}:X \to X$ defined by $c_x(y) = x \conj y$. Define for all $x \in X$ the map
$$
Ad_{x} = T_{1}c_{x} \in GL(\xx).
$$ 
The pointed rack structure on $X$ implies that $c_{x \conj y} = c_{x} \circ c_{y} \circ c_{x}^{-1}$ and $c_{1} = id$, hence $Ad:X \to GL(\xx)$ is a morphism of Lie racks. Let $ad = D_1Ad: \xx \to \mathfrak{gl}(\xx)$ the differential of $Ad$ at $1$. Define a bracket $[-,-]$ on $\xx = T_{1}X$ by setting
$$
\lbrack u,v \rbrack = ad(u)(v).
$$
Differentiate the rack identity $x \conj (y \conj z) = (x \conj y) \conj (x \conj z)$ with respect to each variables involves the Leibniz identity for the bracket $[-,-]$ (cf. \cite{Kinyon}).

\begin{example}[Group] Let $G$ be a Lie group. We get in this way the canonical Lie algebra structure on $T_{1}G$.
\end{example}
\begin{example}[Augmented rack] Let $X \fldr{p} G$ be an augmented Lie rack. The linear map $T_{1}X \fldr{T_{1}p} T_{1}G$ is a Lie algebra in the category of linear maps (see \cite{LodayTensor}). This structure induces a Leibniz algebra structure on $T_{1}X$ which is isomorphic to the one induces by the Lie rack structure on $X$.
\end{example}
We remark that a local smooth structure around $1$ is sufficient to provide $T_{1}X$ with a Leibniz algebra structure.
\begin{proposition} \label{l'espace tangent d'un rack local est Leibniz}
Let $X$ be a local Lie rack, then $T_{1}X$ is a Leibniz algebra.
\end{proposition}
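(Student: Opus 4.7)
The plan is to copy, step by step, the construction used in Proposition \ref{l'espace tangent d'un rack est Leibniz}, observing that every ingredient of that construction is purely local around $1$ and therefore survives in the local setting.

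First I would fix an open neighborhood $U$ of $1$ in $X$ such that $U \times U \subset \Omega$; this is possible because $\Omega$ is open and contains $(1,1)$ by the pointedness axioms $1 \conj 1 = 1$. For $x \in U$, the conjugation $c_x : U \to X$, $y \mapsto x \conj y$, is then smooth and sends $1$ to $1$, and setting $Ad_x := T_1 c_x$ I obtain a smooth map $Ad : U \to End(\xx)$. Since $c_1 = id_U$, we have $Ad_1 = id_\xx$, so by openness of $GL(\xx)$ inside $End(\xx)$ I may shrink $U$ to ensure $Ad$ lands in $GL(\xx)$. Differentiating $Ad$ at $1$ yields a linear map $ad : \xx \to \mathfrak{gl}(\xx)$, and I define the bracket by $[u,v] := ad(u)(v)$.

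To derive the left Leibniz identity, I would shrink $U$ once more so that whenever $x,y,z \in U$ all five pairs $(x,y), (y,z), (x,z), (x, y \conj z), (x \conj y, x \conj z)$ lie in $\Omega$; this is possible by continuity of $\conj$ together with $x \conj 1 = 1$ and $1 \conj y = y$. Axiom (1) in the definition of a local rack then gives the self-distributivity identity $x \conj (y \conj z) = (x \conj y) \conj (x \conj z)$ on all of $U^3$. Differentiating this smooth equality three times, once in each variable at $1$, produces successively $Ad_x \circ Ad_y = Ad_{x \conj y} \circ Ad_x$ (differentiating in $z$), then $Ad_x \circ ad_v = ad_{Ad_x v} \circ Ad_x$ (differentiating in $y$), and finally $ad_u \circ ad_v - ad_v \circ ad_u = ad_{ad_u v}$ (differentiating in $x$). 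Evaluating this last identity on $w \in \xx$ is exactly $[u,[v,w]] = [[u,v],w] + [v,[u,w]]$, as required.

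The main subtlety is the bookkeeping in the two successive shrinkings of $U$: one has to arrange simultaneously that $Ad$ takes values in $GL(\xx)$ and that every iterated rack product demanded by axiom (1) is defined on $U^3$. Both conditions hold on some neighborhood of $1$ by continuity and the pointedness axioms, so their intersection is again a neighborhood, and the construction goes through. Once $U$ is fixed, the differentiation scheme is a verbatim transcription of the one in Proposition \ref{l'espace tangent d'un rack est Leibniz} and requires no new computation.
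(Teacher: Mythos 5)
Your proof is correct and follows exactly the route the paper intends: the paper states this proposition without a separate argument, merely remarking that the construction of Proposition \ref{l'espace tangent d'un rack est Leibniz} only uses the smooth rack structure near $1$, and your write-up supplies precisely the neighborhood-shrinking bookkeeping needed to make that remark rigorous. The three-fold differentiation of the self-distributivity identity, yielding $Ad_x \circ Ad_y = Ad_{x \conj y}\circ Ad_x$, then $Ad_x \circ ad(v) = ad(Ad_x v)\circ Ad_x$, then the Leibniz identity, is the same computation as in the global case.
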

\subsection{From $As_p(X)$-modules to Leibniz representations}
Let $X$ be a rack. An $As_p(X)$-module is an abelian group $A$ provided with a morphism of groups $\phi: As_p(X) \to Aut(A)$. By adjointness, this is the same thing as a morphism of pointed racks $\phi: X \to Conj(Aut(A))$.
\begin{definition}
Let $X$ be a Lie rack, a \textbf{smooth} $As(X)$\textbf{-module} is an $As_p(X)$module $A$ where $A$ is an abelian Lie group and $\phi : X \times A \to A$ is smooth.
\end{definition}
\par
Recall that, given a Leibniz algebra $\gg$, a $\gg$-representation $\aa$ is a vector space provided with two linear maps $\lbrack -,- \rbrack_{L} : \gg \otimes \aa \to \aa$ and $\lbrack -,- \rbrack_{R} : \aa \otimes \gg \to \aa$ satisfying the axioms $(LLM), (LML)$ and $(MLL)$ given in section $1$.
\par
There are two particular classes of modules. The first, called \textit{symmetric}, are the modules where $\lbrack -,- \rbrack_{L} = - \lbrack -,- \rbrack_{R}$. The second, called \textit{anti-symmetric}, are the modules where $\lbrack -,- \rbrack_{R} = 0$. Given a Leibniz algebra $\gg$ and $\aa$ a vector space equipped with a morphism of Leibniz algebra $\phi: \gg \to End(\aa)$, we can put two structures of $\gg$-representation on $\aa$. One is \textit{symmetric} and defined by
$$
\lbrack x,a \rbrack_{L} = \phi_{x}(a) \text{ and } \lbrack a,x\rbrack_{R} = -\phi_{x}(a), \ \ \forall x \in \gg, a \in \aa.
$$
The other is \textit{anti-symmetric} and defined by
$$
\lbrack x,a \rbrack_{L} = \phi_{x}(a) \text{ and } \lbrack a,x\rbrack_{R} = 0, \ \ \forall x \in \gg, a \in \aa.
$$
\par
Moreover, given a rack $X$ and $A$ a (smooth) $As(X)$-module, we can put two structures of (smooth) $X$-module on $A$. One is called \textit{symmetric} and defined by
$$
\phi_{x,y}(a) = \phi_{x}(a) \text{ and } \psi_{x,y}(a) = a - \phi_{x \conj y}(a), \ \ \forall x,y \in X, a \in A.
$$
The other is called \textit{anti-symmetric} and defined by
$$
\phi_{x,y}(a) = \phi_{x}(a) \text{ and } \psi_{x,y}(a) = 0, \ \ \forall x,y \in X, a \in A.
$$
\par
These constructions are related to each other because one is the infinitesimal version of the other. Indeed, let $(A,\phi,\psi)$ be a smooth symmetric $X$-module. We have by definition two smooth maps
$$
\phi: X \times X \times A \to A \text{ and } \psi: X \times X \times A \to A
$$
with $\phi_{1,1} = id, \psi_{1,1} = 0$. Thus the differentials of these maps at $(1,1)$ give us two maps
$$
\epsilon: X \times X \to Aut(\aa); \epsilon(x,y) = T_{1}\phi_{x,y} \,
\text{ and } \,
\chi: X \times X \to End(\aa); \chi(x,y) = T_{1}\psi_{x,y}.
$$
These maps are smooth, so we can differentiate them at $(1,1)$ to obtain
$$
T_{(1,1)}\epsilon: \xx \oplus \xx \to End(\aa) \,
\text{ and } \,
T_{(1,1)}\chi: \xx \oplus \xx \to End(\aa).
$$
Then we define two linear maps $\lbrack -,- \rbrack_{L}: \xx \otimes \aa \to \aa$ and $\lbrack -,- \rbrack_{R}: \aa \otimes \xx \to \aa$ by
$$
\lbrack u,m \rbrack_{L} = T_{(1,1)}\epsilon(u,0)(m) \,
\text{ and } \,
\lbrack m,u \rbrack_{R} = T_{(1,1)}\chi(0,u)(m).
$$
\begin{proposition} \label{Lien entre smooth As(X)-modules et Leibniz representations}
Let $X$ be a Lie rack, $\xx$ be its Leibniz algebra, $A$ be an abelian Lie group and $\aa$ be its Lie algebra. If $(A,\phi,\psi)$ is a smooth symmetric (resp. anti-symmetric) $X$-module, then $(\aa,\lbrack -,- \rbrack_{L},\lbrack -,- \rbrack_{R})$ is a symmetric (resp. anti-symmetric) $\xx$-module. 
\end{proposition}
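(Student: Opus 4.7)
The plan is to unwind the definitions of the brackets $[-,-]_L$ and $[-,-]_R$ directly from the module axioms, reducing everything to a Leibniz morphism statement for a single smooth map $\phi:X\to \mathrm{Aut}(A)$.

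First I would compute $\epsilon$ and $\chi$ explicitly. In the symmetric case, $\phi_{x,y}(a)=\phi_x(a)$ depends only on $x$, so $\epsilon(x,y)=T_1\phi_x$ depends only on $x$; differentiating at $(1,1)$ in direction $(u,v)$ gives $T_{(1,1)}\epsilon(u,v)=d\phi(u)$, where $d\phi:\xx\to\mathrm{End}(\aa)$ is the differential of the smooth pointed rack map $\phi:X\to\mathrm{Conj}(\mathrm{Aut}(A))$. In particular $[u,m]_L=d\phi(u)(m)$. For $\chi$, I use $\psi_{x,y}(a)=a-\phi_{x\conj y}(a)$, so $\chi(x,y)=\mathrm{id}-T_1\phi_{x\conj y}$; the key small computation is $D_{(1,1)}(\conj)(u,v)=v$, which follows from $1\conj y=y$ and $x\conj 1=1$. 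Hence $T_{(1,1)}\chi(u,v)=-d\phi(v)$ and $[m,v]_R=-d\phi(v)(m)$. This already gives $[u,m]_L=-[m,u]_R$, i.e.\ symmetry. In the anti-symmetric case $\psi\equiv 0$ forces $\chi\equiv 0$, so $[-,-]_R=0$ directly.

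Second, I would invoke the naturality of the construction of the Leibniz bracket (Proposition \ref{l'espace tangent d'un rack est Leibniz}): the smoothness and the module axiom $(M_1)$ reformulate $\phi:X\to \mathrm{Aut}(A)$ as a morphism of (pointed) Lie racks into $\mathrm{Conj}(\mathrm{Aut}(A))$, whose Leibniz algebra is $\mathrm{End}(\aa)$ with bracket $[f,g]=fg-gf$. Therefore $d\phi$ is a morphism of Leibniz algebras, giving
$$
d\phi([u,v])\;=\;d\phi(u)\,d\phi(v)-d\phi(v)\,d\phi(u).
$$
Evaluating on $m\in\aa$ and using $[u,m]_L=d\phi(u)(m)$ yields the $(LLM)$ axiom.

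Third, I would handle $(LML)$ and $(MLL)$. In the anti-symmetric case both axioms collapse to $0=0$ since every occurrence of $[-,-]_R$ vanishes, so only $(LLM)$ needs checking and it is already done. In the symmetric case, the identity $[m,x]_R=-[x,m]_L$ turns $(LML)$ and $(MLL)$ into $(LLM)$ up to signs, so both follow by a direct substitution from the identity proven in the previous step. The main obstacle I anticipate is purely bookkeeping: correctly computing $T_{(1,1)}\chi$ using that $(x,y)\mapsto x\conj y$ has derivative $(u,v)\mapsto v$ at $(1,1)$, and being careful in the anti-symmetric case that although $\chi$ vanishes identically, one still has a genuine Leibniz representation because $(LML)$ and $(MLL)$ degenerate consistently.
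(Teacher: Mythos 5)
Your proposal is correct and follows essentially the same route as the paper: the symmetry relation $[u,m]_L=-[m,u]_R$ (resp.\ $[-,-]_R=0$) is read off directly from the definitions of $\epsilon$ and $\chi$, and $(LLM)$ is obtained by differentiating the rack-morphism identity $\phi_x\circ\phi_y=\phi_{x\conj y}\circ\phi_x$, which the paper extracts from axiom $(M_1)$ at $z=1$ and which you repackage as functoriality of the tangent Leibniz algebra applied to $\phi:X\to Conj(\mathrm{Aut}(A))$. Your explicit reduction of $(LML)$ and $(MLL)$ to $(LLM)$ and the computation $D_{(1,1)}(\conj)(u,v)=v$ are correct details that the paper leaves implicit.
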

\begin{proof} It is clear that if $(A,\phi,\psi)$ is symmetric then $\lbrack -,- \rbrack_{L} = -\lbrack -,- \rbrack_{R}$, and if $(A,\phi,\psi)$ is anti-symmetric then $[-,-]_{R} = 0$. 
\par
Now let us prove that $[-,-]_{L}$ satisfies the axiom $(LLM)$. By hypothesis on $\phi$, the relation $\phi_{x,y \conj z} \circ \phi_{y,z} = \phi_{x \conj y,x \conj z} \circ \phi_{x,z}$ is true for all $x,y,z \in X$. Taking $z=1$ we obtain $\phi_{x,1} \circ \phi_{y,1} = \phi_{x \conj y,1} \circ \phi_{x,1}$. By differentiating this equality with respect to each variables we find that $[-,-]_{L}$ satisfies the axiom $(LLM)$.
\end{proof}
\subsection{From Lie rack cohomology to Leibniz cohomology}
\begin{proposition} \label{morphisme de la cohomologie de rack vers la cohomologie de Leibniz}
Let $X$ be a Lie rack and let $A$ be a smooth $As(X)$-module. We have morphisms of cochains complexes
$$
CR_{p}^{n}(X,A^{s})_{s} \fldr{\delta^{n}} CL^{n}(\xx,\aa^{s}) \, \text{ and } \,
CR_{p}^{n}(X,A^{a})_{s} \fldr{\delta^{n}} CL^{n}(\xx,\aa^{a}),
$$ 
given by $\delta^{n}(f)(a_1,\dots,a_n) = d^{n}f(1,\dots,1)\big((a_1,0,\dots,0),\dots,(0,\dots,0,a_n)\big)$ (where $d^nf$ is the $n$-th differential of $f$).
\end{proposition}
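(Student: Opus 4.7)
The plan is to verify two things: that $\delta^n(f)$ is a well-defined element of $CL^n(\xx, \aa)$, and that $\delta^{n+1} \circ d_R^n = dL^n \circ \delta^n$.

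For well-definedness, I fix smooth curves $x_i(t_i)$ in $X$ with $x_i(0) = 1$ and $\dot{x_i}(0) = a_i$. Since $f$ vanishes whenever any argument equals $1$, every monomial in the Taylor expansion of $f(x_1(t_1), \ldots, x_n(t_n))$ at the origin contains a factor of every $t_j$. The coefficient of $t_1 \cdots t_n$---which is what $\delta^n(f)(a_1, \ldots, a_n)$ computes---therefore depends only on the first-order data $a_1, \ldots, a_n$ and is manifestly multilinear, yielding an element of $Hom(\xx^{\otimes n}, \aa) = CL^n(\xx, \aa)$.

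To show that $\delta$ is a chain map, I would apply $\delta^{n+1}$ separately to each of the three kinds of summand in $d_R^n f$ and match with the corresponding piece of $dL^n(\delta^n f)$. First, for the summand $\phi_{x_1 \conj \cdots \conj x_i, \cdots}(f(x_1, \ldots, \widehat{x_i}, \ldots, x_{n+1}))$: the factor $f(\ldots)$ is independent of $x_i$, so its Taylor expansion provides $t_1 \cdots \widehat{t_i} \cdots t_{n+1} \cdot \delta^n f(a_1, \ldots, \widehat{a_i}, \ldots, a_{n+1})$ at leading order, and the missing $t_i$-derivative must act on $\phi$. Using $1 \conj v = v$, the product $x_1 \conj \cdots \conj x_i$ reduces to $x_i$ when $x_1 = \cdots = x_{i-1} = 1$, so the relevant partial of $\phi$ evaluates to $D_1\phi_1(a_i) = [a_i, \cdot]_L$ by Proposition \ref{Lien entre smooth As(X)-modules et Leibniz representations}, giving the $[a_i, \delta^n f(\ldots)]_L$ terms of $dL^n$. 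Second, for the summand $f(x_1, \ldots, x_i \conj x_{i+1}, \ldots, x_i \conj x_{n+1})$: the $j$-th argument of $f$ (for $j > i$) equals $1$ exactly when $x_j = 1$ because $u \conj 1 = 1$, so each $t_j$ with $j \neq i$ must activate one argument of $f$ via $\partial_{t_j}(x_i \conj x_j)|_0 = a_j$; the remaining $t_i$ must pair with some $t_j$, $j > i$, contributing the mixed partial $\partial_{t_i} \partial_{t_j}(x_i \conj x_j)|_0 = [a_i, a_j]$, which is the Leibniz bracket by Proposition \ref{l'espace tangent d'un rack est Leibniz}. Summing over the pairing index $j$ produces the $\delta^n f(\ldots, [a_i, a_j], \ldots)$ terms of $dL^n$. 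Third, for the $\psi$-summand: in the anti-symmetric case both this term and $[-,-]_R$ vanish; in the symmetric case $\psi_{y, z}(a) = a - \phi_{y \conj z}(a)$, and the first-order expansion of $y \conj z$ (with the indicated iterated conjugations) extracts $[a_{n+1}, \cdot]_L$ applied to $\delta^n f(a_1, \ldots, a_n)$, which after the symmetric identification $[u, m]_L = -[m, u]_R$ reproduces the $[\delta^n f(\ldots), a_{n+1}]_R$ term of $dL^n$.

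The main obstacle is the second case: one has to enumerate carefully which combinations of partial derivatives of $x_i \conj x_j$ at $(1, 1)$ are non-zero, and rule out contributions from higher-order Taylor terms. The essential input is the local expansion $x \conj y - 1 = (y - 1) + [x - 1, y - 1] + O(3)$ in any chart centered at $1$ with trivial differential, a direct consequence of $1 \conj v = v$, $u \conj 1 = 1$, and the very definition of the Leibniz bracket of a Lie rack. Once this expansion is in hand, the remaining work is combinatorial bookkeeping of indices and signs.
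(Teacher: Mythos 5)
Your proposal is correct and follows essentially the same route as the paper: differentiate each summand of $d_R^n f$ at $(1,\dots,1)$, use the vanishing of $f$ on arguments equal to $1$ to force each $t_j$-derivative onto its own slot, identify the derivative of $\phi$ (resp.\ $\psi$) with $[-,-]_L$ (resp.\ $[-,-]_R$) via Proposition \ref{Lien entre smooth As(X)-modules et Leibniz representations}, and extract the bracket from the mixed second-order term of $x\conj y$ at $(1,1)$. The only additions are your explicit multilinearity/well-definedness check and the explicit handling of the $\psi$-summand, both of which the paper leaves implicit.
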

\begin{proof} Let $f \in CR_{p}^{n}(X,A^{s})$ and $(x_{0}, \dots , x_{n}) \in X^{n+1}$, we want to prove that 
$$
\delta^{n+1}(d_R^nf) = d_L^n(\delta^{n}(f)).
$$
Let $(\gamma_{0}(t_{0}),\dots,\gamma_{n}(t_{n}))$ be a family of paths $\gamma_{i}:]-\epsilon_{i},+\epsilon_{i}[ \to V$ such that $\gamma_{i}(0) = 1\text{ and } \left.\frac{\partial}{\partial s}\right|_{s=0} \gamma_{i}(s) = x_{i}$. Because $f(x_{0},\dots,1,\dots,x_{n}) = 0$, for all $i \in \{1,\dots,n\}$ we have
\begin{align*}
\displaystyle \left.\frac{\partial^{n+1}}{\partial t_{0}\dots \partial t_{n}}\right|_{|t_{i} = 0} \phi_{\gamma_{0}(t_{0}) \conj \dots \conj \gamma_{i}(t_{i})}&(f(\gamma_{0}(t_{0}), \dots, \gamma_{i-1}(t_{i-1}), \gamma_{i+1}(t_{i+1}), \dots, \gamma_{n}(t_{n}))) \\ &= a_{i}.d_{n}(f)(a_{0},\dots,\widehat{a_{i}},\dots,a_{n})
\end{align*}
Moreover for all $i \in \{ 1, \dots, n \}, \displaystyle{\left.\frac{\partial^{n+1}}{\partial t_{0}\dots \partial t_{n}}\right|_{t_{l} = 0}} f(\gamma_{0}(t_{0}),\dots,\gamma_{i}(t_{i}) \conj \gamma_{i+1}(t_{i+1}),\dots,\gamma_{i}(t_{i})\conj \gamma_{n}(t_{n}))$ is equal to
$$
\displaystyle\left.{\frac{\partial}{\partial t_{i}}}\right|_{t_{i}=0} d^{n}f(1,\dots,1)\big((a_{0},0,\dots,0),\dots,(0,\dots,Ad_{\gamma_{i}(t_{i})}(a_{i+1}),\dots,0),\dots, (0,\dots,0,Ad_{\gamma_{i}(t_{i})}(a_{n})\big)
$$
which is equal to
$$
\displaystyle{\sum_{k = i+1}^{n}} d^{n}f(1,\dots,1)\big((a_{0},0,\dots,0),\dots,(0,\dots,[a_{i},a_{k}],\dots,0),\dots,(0,\dots,0,a_{n})\big)
$$
Hence
$$
\begin{array}{ll}
\delta^{n+1}(d_R^nf)(a_{0},\dots,a_{n}) &=  \displaystyle{\sum_{i=0}^{n}} (-1)^{i} \Big(a_{i}.\delta^{n}(f)(a_{0},\dots,\hat{a_{i}},\dots,a_{n}) - \displaystyle{\sum_{k = i+1}^{n}} \delta^{n}f(a_{0},\dots,[a_{i},a_{k}],\dots,a_{n})\Big)\\
					       &=  \displaystyle{\sum_{i=0}^{n}} (-1)^{i} a_{i}.\delta^{n}(f)(a_{0},\dots,\hat{a_{i}},\dots,a_{n}) \\
					       & + \displaystyle{\sum_{ 0 \leq i<k \leq n}} (-1)^{i+1}\delta^{n}f(a_{0},\dots,[a_{i},a_{k}],\dots,a_{n})\\
\end{array}
$$
that is
$$
\delta^{n+1}(d_R^nf) = d_L^n(\delta^{n}(f))
$$
This is exactly the same proof as for the case where $A$ is anti-symmetric.
\end{proof}
We remark that we only need a local cocyle identity around $1$. Thus we have
\begin{proposition} \label{morphisme de la cohomologie de rack locale vers la cohomologie de Leibniz} 
Let $X$ be a Lie rack, let U be a $1$-neighborhood in $X$ and let $A$ be a smooth $As(X)$-module. We have morphisms of cochain complexes
$$
CR_{p}^{n}(U,A^{s}) \fldr{\delta^{n}} CL^{n}(\xx,\aa^{s}) \,
\text{ and } \,
CR_{p}^{n}(U,A^{a}) \fldr{\delta^{n}} CL^{n}(\xx,\aa^{a}),
$$
given by $\delta^{n}(f)(a_0,\dots,a_n) = d^{n}f(1,\dots,1)((a_1,0,\dots,0),\dots,(0,\dots,0,a_n))$.
\end{proposition}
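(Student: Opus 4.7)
The plan is to reduce this to the proof of Proposition \ref{morphisme de la cohomologie de rack vers la cohomologie de Leibniz} by observing that the entire argument there is local in nature: it only uses the values of $f$ and of the rack product $\conj$ in an arbitrarily small neighborhood of $(1,\dots,1)$. First, I would check that $\delta^{n}$ is well-defined on $CR_{p}^{n}(U,A^{s})$ and $CR_{p}^{n}(U,A^{a})$. An element $f$ in either of these is smooth on an open neighborhood $W$ of $(1,\dots,1)$ inside $U_{n-loc}$; the $n$-th differential $d^{n}f(1,\dots,1)$ therefore makes sense, and the condition $f(x_{1},\dots,1,\dots,x_{n}) = 0$ guarantees that the mixed partial derivative picks out precisely the multilinear quantity used to define $\delta^{n}(f)$.

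Next I would verify that $\delta^{n}(d_{R}^{n}f)$ is well-defined. The expression $d_{R}^{n}f(x_{1},\dots,x_{n+1})$ involves the products $x_{i}\conj x_{j}$ and iterated conjugates of the form $x_{i_{1}}\conj\dots\conj x_{i_{j}}$; by the definition of $U_{(n+1)-loc}$ (with its symmetric or anti-symmetric version according to the representation), all the iterated products appearing in $d_{R}^{n}f$ lie in $U$ whenever $(x_{1},\dots,x_{n+1})$ is chosen in a sufficiently small neighborhood of $(1,\dots,1)$ inside $U_{(n+1)-loc}$. Hence $d_{R}^{n}f$ is defined and smooth on a neighborhood of $(1,\dots,1)$, and $\delta^{n+1}(d_{R}^{n}f)$ makes sense.

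Now I would carry out the cochain map identity $\delta^{n+1}(d_{R}^{n}f) = d_{L}^{n}(\delta^{n}f)$. I pick smooth paths $\gamma_{i}:\,]-\epsilon_{i},\epsilon_{i}[ \to X$ with $\gamma_{i}(0) = 1$ and $\dot\gamma_{i}(0) = a_{i}$, all taking values in a small $1$-neighborhood so that every iterated conjugate appearing in $d_{R}^{n}f(\gamma_{0}(t_{0}),\dots,\gamma_{n}(t_{n}))$ stays in $U$ for $t_{i}$ small. The rest of the computation is then word-for-word that of Proposition \ref{morphisme de la cohomologie de rack vers la cohomologie de Leibniz}: the normalization $f(x_{1},\dots,1,\dots,x_{n})=0$ kills all the terms except the ones producing $a_{i}.\delta^{n}(f)(a_{0},\dots,\widehat{a_{i}},\dots,a_{n})$, and differentiating the factors $\gamma_{i}(t_{i})\conj\gamma_{k}(t_{k})$ produces exactly the bracket terms $[a_{i},a_{k}]$ via the construction of the Leibniz bracket on $\xx$ from Proposition \ref{l'espace tangent d'un rack local est Leibniz}.

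The only point that requires care, and therefore the main obstacle, is the bookkeeping of domains of definition: one must shrink the $\epsilon_{i}$ enough so that all the iterated conjugates $\gamma_{i_{1}}(t_{i_{1}})\conj\dots\conj\gamma_{i_{j}}(t_{i_{j}})$ that occur in the formula for $d_{R}^{n}f$ remain in $U$, and verify this simultaneously for both the symmetric and the anti-symmetric variants (where the required iterated products differ, as reflected in the definition of $U_{n-loc}$). Once this is in place, the algebraic identity is identical to the global case, and no extra structure is used.
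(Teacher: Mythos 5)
Your proposal is correct and follows essentially the same route as the paper, which gives no separate proof and simply remarks that the argument for Proposition \ref{morphisme de la cohomologie de rack vers la cohomologie de Leibniz} only uses the data of $f$ and $\conj$ near $(1,\dots,1)$. Your additional bookkeeping about $U_{n-loc}$ and shrinking the $\epsilon_i$ is exactly the verification the paper leaves implicit.
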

\subsection{From Leibniz cohomology to local Lie rack cohomology}
In this section, we study two cases of Leibniz cocyles integration. This section will be used in the following section to integrate a Leibniz algebra into a local augmented Lie rack.
\par
First, we study the integration of a 1-cocycle in $ZL^{1}(\gg,\aa^{s})$ into a Lie rack 1-cocycle in $ZR^{1}_p(G,\aa^{s})_{s}$, where $G$ is a simply connected Lie group with Lie algebra $\gg$ and $\aa$ a representation of $G$.
\par
Secondly, we use the result of the first part to study the integration of a 2-cocycle in $ZL^{2}(\gg,\aa^{a})$ into a local Lie rack 2-cocycle in $ZR^{2}_p(U,\aa^{a})_{s}$, where $U$ is a $1$-neighborhood in a simply connected Lie group $G$ with Lie algebra $\gg$, and $\aa$ a representation of $G$. It is this second part that we will use to integrate Leibniz algebras. 
\subsubsection{From Leibniz $1$-cocycles to Lie rack $1$-cocycles}
Let $G$ be a simply connected Lie group and $\aa$ a representation of $G$. We want to define a morphism $I^{1}$ from $ZL^{1}(\gg,\aa^{s})$ to $ZR_{p}^{1}(G,\aa^{s})_{s}$ which sends $BL^{1}(\gg,\aa^{s})$ into $BR_{p}^{1}(G,\aa^{s})_{s}$. For this, we put 
$$
I^{1}(\omega)(g) = \int_{\gamma_{g}} \omega^{eq},
$$
where $\omega \in ZL^{1}(\gg,\aa^{s})$, $\gamma: G \times [0,1] \to G$ is a smooth map such that $\gamma_{g}$ is a path from $1$ to $g$, $\gamma_1$ is the constant path equal to $1$, and $\omega^{eq}$ is the closed left equivariant differential form in $\Omega^{1}(G,\aa)$ defined by
$$
\omega^{eq}(g)(m) = g.(\omega(T_{g}L_{g^{-1}}(m))).
$$
By definition, it is clear that $I^1(\omega)(1) = 0$.
\par
For the moment, $I^{1}(\omega)$ depends on $\gamma$, but because $\omega$ is a cocycle and $G$ is simply connected, the dependence with respect to $\gamma$ disappears.  
\begin{proposition}
$I^{1}$ does not depend on $\gamma$.
\end{proposition}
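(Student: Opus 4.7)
The plan is to show that $\omega^{eq}$ is a closed $1$-form on $G$, and then to use the simple connectedness of $G$ together with Stokes' theorem to conclude that $\int_{\gamma_g}\omega^{eq}$ depends only on the endpoint $g$ (and on $1$, which is fixed).

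First I would verify the closedness of $\omega^{eq}$. Since both sides are tensorial, it suffices to evaluate $d\omega^{eq}$ on a pair of left-invariant vector fields $X_\xi, X_\eta$ generated by $\xi,\eta \in \gg$. From the defining formula, $\omega^{eq}(g)(X_\xi(g)) = g.\omega(\xi)$, so the Cartan formula for the exterior derivative gives
\begin{align*}
d\omega^{eq}(X_\xi,X_\eta)(g) &= X_\xi\!\left(\omega^{eq}(X_\eta)\right)(g) - X_\eta\!\left(\omega^{eq}(X_\xi)\right)(g) - \omega^{eq}([X_\xi,X_\eta])(g) \\
&= g.\bigl(\xi.\omega(\eta) - \eta.\omega(\xi) - \omega([\xi,\eta])\bigr).
\end{align*}
Since $\aa^s$ is a symmetric representation, the Leibniz $1$-cocycle identity $dL^{1}\omega=0$ is exactly the vanishing of $\xi.\omega(\eta) - \eta.\omega(\xi) - \omega([\xi,\eta])$. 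Hence $d\omega^{eq} = 0$.

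Next, given two choices $\gamma, \gamma'$ of the smooth family of paths, fix $g \in G$ and consider the loop $\ell_g$ obtained by concatenating $\gamma_g$ with $(\gamma'_g)^{-1}$ (run backwards). Since $G$ is simply connected, $\ell_g$ is null-homotopic, and one can find a smooth singular $2$-chain $c_g$ in $G$ with $\partial c_g = \gamma_g - \gamma'_g$ (up to the standard reparametrisation identifications). Stokes' theorem then yields
$$
\int_{\gamma_g}\omega^{eq} - \int_{\gamma'_g}\omega^{eq} = \int_{\partial c_g}\omega^{eq} = \int_{c_g} d\omega^{eq} = 0,
$$
so $I^1(\omega)(g)$ is independent of the chosen family.

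The main obstacle is the closedness of $\omega^{eq}$; once that is established, the independence is a routine application of Stokes together with simple connectedness. A minor technical point is smoothing the concatenated loop so that the filling $c_g$ can be taken smooth, but this is standard and does not change the argument.
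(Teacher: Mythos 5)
Your proof is correct and follows essentially the same route as the paper: the difference $\gamma_g-\gamma'_g$ is a cycle, hence a boundary since $G$ is simply connected, and Stokes' theorem together with $d_{dR}\omega^{eq}=0$ kills the integral. You additionally verify the closedness of $\omega^{eq}$ from the Leibniz $1$-cocycle identity in the symmetric representation, a step the paper builds into the definition of $\omega^{eq}$ rather than proving here.
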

\begin{proof} Let $\gamma,\gamma':G \times [0,1] \to G$ such that $\gamma_{g}(0)=\gamma'_{g}(0)=1$ and $\gamma_{g}(1)=\gamma'_{g}(1)=g$. 
%
%
As $H_{1}(G)=0$, the cycle $\gamma_{g}-\gamma'_{g}$ is a boundary $\partial\sigma_{g}$.
So
\begin{align*}
\int_{\gamma_{g}}\omega^{eq}-\int_{\gamma'_{g}}\omega^{eq}&=\int_{\gamma_{g}-\gamma'_{g}}\omega^{eq} =\int_{\partial\sigma_{g}}\omega^{eq} =\int_{\sigma_{g}}d_{dR}\omega^{eq} =0,
\end{align*}
and $I^{1}$ does not depend on $\gamma$.
\end{proof}
\begin{proposition} \label{I^1 envoit cocycle sur cocycle}
$I^{1}$ sends cocycles to cocycles and coboundaries to coboundaries.
\end{proposition}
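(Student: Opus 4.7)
The plan is to reduce the rack $1$-cocycle identity to the classical group $1$-cocycle identity $F(gh) = F(g) + g \cdot F(h)$, where $F := I^{1}(\omega)$, and then to translate the result into the language of rack cohomology via the symmetric module formulas of Section~2.2. I would first establish the group cocycle property, then deduce both the rack cocycle condition and the coboundary statement from it.

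For the group cocycle identity, I would exploit the path-independence established in the preceding proposition by choosing as path from $1$ to $gh$ the concatenation $\gamma_{g} * (L_{g}\circ \gamma_{h})$. A direct computation from the definition $\omega^{eq}(k)(m) = k \cdot \omega(T_{k}L_{k^{-1}}m)$ shows that $\omega^{eq}$ is left equivariant, $L_{g}^{*}\omega^{eq} = g \cdot \omega^{eq}$. Combined with the linearity of the $G$-action on $\aa$ (which lets it commute with the integral) this gives
$$
F(gh) = \int_{\gamma_{g}}\omega^{eq} + \int_{\gamma_{h}} L_{g}^{*}\omega^{eq} = F(g) + g \cdot F(h).
$$

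From $F(1)=0$, immediate since $\gamma_{1}$ may be taken constant, the group cocycle identity yields $F(g^{-1}) = -g^{-1}\cdot F(g)$, and then
$$
F(g \conj h) = F((gh)g^{-1}) = F(gh) + (gh)\cdot F(g^{-1}) = F(g) + g \cdot F(h) - (g\conj h)\cdot F(g).
$$
Comparing with the symmetric module formulas $\phi_{g,h}(a) = g\cdot a$ and $\psi_{g,h}(a) = a - (g\conj h)\cdot a$ recalled in Section~2.2, this reads exactly $d_{R}^{1}F(g,h) = 0$. For the coboundary part, a symmetric Leibniz $0$-cochain is an element $v \in \aa$ with coboundary $\omega(x) = x \cdot v$, and I would observe that the associated equivariant form is exact: setting $\widetilde v : G \to \aa$, $\widetilde v(g) := g \cdot v$, one has $d\widetilde v(g)(m) = g \cdot (T_{g}L_{g^{-1}}m) \cdot v = \omega^{eq}(g)(m)$, so $I^{1}(\omega)(g) = \widetilde v(g) - \widetilde v(1) = g \cdot v - v$, which is the rack $0$-coboundary of $v$.

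The step I expect to be most delicate is the sign and indexing bookkeeping needed to match the identity $F(g\conj h) = F(g) + g\cdot F(h) - (g\conj h)\cdot F(g)$ with the vanishing of $d_{R}^{1}F$ in the conventions of Section~2.2; once the left equivariance $L_{g}^{*}\omega^{eq} = g\cdot \omega^{eq}$ is in hand, every remaining step reduces to Stokes' theorem together with path concatenation.
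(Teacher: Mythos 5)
Your argument is correct, but for the cocycle half it takes a genuinely different route from the paper. The paper verifies the rack identity directly: writing out $d_R I^1(\omega)(g,h) = g.I^1(\omega)(h) - I^1(\omega)(g \conj h) - (g \conj h).I^1(\omega)(g) + I^1(\omega)(g)$ and pushing the module action inside each integral via the equivariance of $\omega^{eq}$, the four terms assemble into $\int_c \omega^{eq}$ over the $1$-cycle $c = g\gamma_h - \gamma_{g\conj h} - (g\conj h)\gamma_g + \gamma_g$; since $H_1(G)=0$ this cycle bounds a $2$-chain, and Stokes' theorem together with $d_{dR}\omega^{eq}=0$ kills the integral. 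You instead first establish the stronger group cocycle identity $F(gh)=F(g)+g\cdot F(h)$ by concatenating $\gamma_g$ with $L_g\circ\gamma_h$ and invoking the path-independence of the preceding proposition, then deduce the rack identity purely algebraically from $F(g\conj h)=F(ghg^{-1})$. The paper records exactly this implication in the remark that follows (where it notes, citing Neeb, that $I^1(\omega)$ is also a Lie group $1$-cocycle and that the group identity implies the rack one), so your route is anticipated there but is not the proof the paper gives for the proposition itself. Both arguments consume the same inputs --- closedness and left equivariance of $\omega^{eq}$, simple connectedness of $G$, linearity of the action --- and your sign bookkeeping does match the paper's conventions ($\phi_{g,h}(a)=g\cdot a$, $\psi_{g,h}(a)=a-(g\conj h)\cdot a$). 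Your version has the advantage of exhibiting the extra group-cocycle structure up front, which is precisely what the paper later exploits for the augmented-rack identity satisfied by $I^2(\omega)$; the only point needing a word of care is that the concatenated path is merely piecewise smooth, which is harmless for integrating a closed $1$-form. The coboundary half of your argument is essentially identical to the paper's: $\omega^{eq}=d_{dR}\beta^{eq}$ with $\beta^{eq}(g)=g\cdot\beta$, so the integral telescopes to $g\cdot\beta-\beta=d_R\beta(g)$.
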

\begin{proof} First, let $\omega \in ZL^{1}(\gg,\aa^{s})$, we have
\begin{align*}
d_{R}I(\omega)(g,h) &= g.I(\omega)(h) - I(\omega)(g \conj h) - (g \conj h).I(\omega)(g) + I(\omega)(g)\\
	       			&=g.\int_{\gamma_{h}}\omega^{eq}-\int_{\gamma_{g \conj h}}\omega^{eq}-(g \conj h).\int_{\gamma_{g}}\omega^{eq}+\int_{\gamma_{g}}\omega^{eq}\\
	       			&=\int_{\gamma_{h}}g.\omega^{eq}-\int_{\gamma_{g \conj h}}\omega^{eq}-\int_{\gamma_{g}}(g \conj h).\omega^{eq}+\int_{\gamma_{g}}\omega^{eq}\\
	       			&=\int_{g\gamma_{h}}\omega^{eq}-\int_{\gamma_{g \conj h}}\omega^{eq}-\int_{(g \conj h)\gamma_{g}}\omega^{eq}+\int_{\gamma_{g}}\omega^{eq}\\
	       			&=\int_{g\gamma_{h}-\gamma_{g \conj h}-(g \conj h)\gamma_{g}+\gamma_{g}}\omega^{eq}.
\end{align*}
As $H^{1}(G) = 0$ and $\partial(g\gamma_{h}-\gamma_{g \conj h}-(g \conj h)\gamma_{g}+\gamma_{g}) = 0$, there exists $\gamma_{g,h}: [0,1]^{2} \to G$ such that $\partial\gamma_{g,h} = g\gamma_{h}-\gamma_{g \conj h}-(g \conj h)\gamma_{g}+\gamma_{g}$. Hence, we have 
\begin{align*}
d_{R}I(\omega)(g,h)	&=\int_{\partial \gamma_{g,h}}\omega^{eq}=\int_{\gamma_{g,h}}d_{dR}\omega^{eq}=0.      
\end{align*}
Hence $ZL^{1}(\gg,\aa^{s})$ is sent to $ZR_{p}^{1}(G,\aa^{s})_{s}$.
\vskip 0.2cm
Secondly, let $\omega \in BL^{1}(\gg,\aa^{s})$. There exists $\beta \in \aa$ such that $\omega(m)=m.\beta$. We have
\begin{align*}
I(\omega)(g)&=\int_{\gamma_{g}}\omega^{eq}=\int_{\gamma_{g}}(d_L\beta)^{eq}=\int_{\gamma_{g}}d_{dR}\beta^{eq}=\beta^{eq}(g)-\beta^{eq}(1)=g.\beta - \beta =d_{R}\beta(g).
\end{align*}
Hence $BL^{1}(\gg,\aa^{s})$ is sent to $BR_{p}^{1}(G,\aa^{s})_{s}$.
\end{proof}
\begin{proposition}
$I^{1}$ is a left inverse for $\delta^{1}$.
\end{proposition}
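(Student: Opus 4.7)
The plan is to identify the $\aa$-valued $1$-form $df$ on $G$ with the left-equivariant $1$-form $(\delta^{1}(f))^{eq}$, after which the statement follows immediately from the fundamental theorem of calculus applied to the path integral defining $I^{1}$. This argument applies naturally on the subspace of Lie group $1$-cocycles (the subspace of $ZR_{p}^{1}(G,\aa^{s})_{s}$ where $I^{1}\circ\delta^{1}=\mathrm{id}$ can hold), which is precisely where $\mathrm{Im}(I^{1})$ sits.

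First, I would verify that $df=(\delta^{1}(f))^{eq}$ as $\aa$-valued $1$-forms on $G$. It suffices to check the equality on the left-invariant framing. For $X\in\gg$ and $g\in G$, the vector $T_{1}L_{g}(X)\in T_{g}G$ is the velocity at $t=0$ of the curve $t\mapsto g\exp(tX)$. By the very definition of the equivariant form,
\[
(\delta^{1}(f))^{eq}(g)\bigl(T_{1}L_{g}(X)\bigr)=g\cdot\delta^{1}(f)(X).
\]
On the other hand, using the group cocycle identity $f(g\exp(tX))=f(g)+g\cdot f(\exp(tX))$ and differentiating at $t=0$,
\[
df(g)\bigl(T_{1}L_{g}(X)\bigr)=\left.\tfrac{d}{dt}\right|_{t=0}f(g\exp(tX))=g\cdot T_{1}f(X)=g\cdot\delta^{1}(f)(X).
\]
Since the left-invariant vector fields span $T_{g}G$ at every point, the two $1$-forms coincide.

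Second, I would conclude by the fundamental theorem of calculus along any smooth path $\gamma_{g}$ from $1$ to $g$, using the pointed condition $f(1)=0$:
\[
I^{1}(\delta^{1}(f))(g)=\int_{\gamma_{g}}(\delta^{1}(f))^{eq}=\int_{\gamma_{g}}df=f(\gamma_{g}(1))-f(\gamma_{g}(0))=f(g).
\]

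The main obstacle is the domain of validity. The identification $df=(\delta^{1}(f))^{eq}$ uses the group cocycle identity in an essential way at every $g$, not merely at $1$; a generic Lie rack $1$-cocycle need not be a Lie group $1$-cocycle (already for abelian $G$ with trivial action, every smooth function vanishing at $0$ is a rack $1$-cocycle, while only linear functions are group $1$-cocycles), so $I^{1}\circ\delta^{1}=\mathrm{id}$ cannot hold on all of $ZR_{p}^{1}(G,\aa^{s})_{s}$. The statement must therefore be read as an identity on the subspace of group cocycles, equivalently on $\mathrm{Im}(I^{1})$, and on that subspace the argument above goes through without further difficulty.
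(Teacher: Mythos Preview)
You have read ``$I^{1}$ is a left inverse for $\delta^{1}$'' in the standard way, as $I^{1}\circ\delta^{1}=\mathrm{id}$, and then correctly noticed that this cannot hold on all of $ZR_{p}^{1}(G,\aa^{s})_{s}$. That observation should have been a red flag: the paper's wording is unfortunate, and what is actually meant, proved, and used downstream is the opposite composition
\[
\delta^{1}\circ I^{1}=\mathrm{id}\qquad\text{on }ZL^{1}(\gg,\aa^{s}).
\]
This is the direction that makes $[\delta^{1}]$ surjective, and the analogous proposition for $I^{2}$ (labelled the same way) likewise proves $\delta^{2}\circ I^{2}=\mathrm{id}$; it is precisely this that allows one to start from a Leibniz cocycle $\omega$, integrate it to a rack cocycle $I(\omega)$, and know that differentiating gives back $\omega$.

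The paper's argument is therefore in the other direction and entirely elementary: take $\omega\in ZL^{1}(\gg,\aa^{s})$, pick a chart $\varphi$ at $1$ with $\varphi(1)=0$ and $d\varphi^{-1}(0)=\mathrm{id}$, set $\alpha_{x}(s)=\varphi^{-1}(sx)$ and $\gamma_{\alpha_{x}(s)}(t)=\varphi^{-1}(tsx)$, and compute
\[
\delta^{1}(I^{1}(\omega))(x)=\left.\frac{\partial}{\partial s}\right|_{s=0}\int_{0}^{1}\omega^{eq}\bigl(\gamma_{\alpha_{x}(s)}(t)\bigr)\bigl(\tfrac{\partial}{\partial t}\gamma_{\alpha_{x}(s)}(t)\bigr)\,dt,
\]
differentiating under the integral sign; at $s=0$ the integrand reduces to the constant $\omega(x)$. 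No group-cocycle identity for $f$ is invoked, because no $f$ is given: one starts from $\omega$. Your computation is internally correct on the subspace of group $1$-cocycles, but it establishes a different identity that the paper neither claims nor needs.
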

\begin{proof} Let $\omega \in ZL^{1}(\gg,\aa^{s})$. Let $\varphi:U \to \gg$ be a local chart around $1$ such that $\varphi(1)=0$ and $d\varphi^{-1}(0)=id$. We define for $x \in \gg$ the smooth map  $\alpha_{x}: ]-\epsilon,+\epsilon[ \to U$ by setting $\alpha_{x}(s) = \varphi^{-1}(sx)$, and we define for all $s \in ]-\epsilon,+\epsilon[$ the smooth map $\gamma_{\alpha_{x}(s)}:[0,1] \to U$ by setting $\gamma_{\alpha_{x}(s)}(t) = \varphi^{-1}(tsx)$. We have
\begin{align*}
\delta^{1}(I^{1}(\omega))(x)&= \dpart{s}I^{1}(\omega)(\alpha_{x}(s)) = \dpart{s}\int_{\gamma_{\alpha_{x}(s)}}\omega^{eq} =\dpart{s}\int_{[0,1]}\gamma_{\alpha_{x}(s)}^{*}\omega^{eq}\\
 &=\dpart{s}\int_{[0,1]}\omega^{eq}(\gamma_{\alpha_{x}(s)}(t))(\dpart{t}\gamma_{\alpha_{x}(s)}(t))dt\\
%
%
	       			      &=\int_{[0,1]}\dpart{s}\omega^{eq}(\varphi^{-1}(tsx))(sx)dt =\int_{[0,1]}\dpart{s}(\varphi^{-1})^{*}\omega^{eq}(tsx)(sx)dt\\
	       			      &=\int_{[0,1]}\dpart{s}s(\varphi^{-1})^{*}\omega^{eq}(tsx)(x)dt 
				      =\omega(x)\int_{[0,1]}dt\\
	       			      &=\omega(x).
\end{align*}
Hence $\delta^{1} \circ I^{1} = id$.
\end{proof}
\begin{remark} \label{I^1 est un cocycle de groupe}
In fact, $I^1(\omega)$ is also a Lie group 1-cocycle. Indeed, the formula to define $I^1(\omega)$ is the same as the one defined by K.H. Neeb in Section 3 of \cite{Neeb2}, and in this article he shows that $I^1(\omega)$ is a group cocycle. The following calculation shows that this group cocycle identity satisfied by $I^1(\omega)$ implies the rack cocycle identity satisfied by $I^1(\omega)$. Indeed, $I^1(\omega)$ is a group cocycle, thus we have
$$
d_{Gp}I^{1}(\omega)(g,h) - d_{Gp}I^1(\omega)(g \conj h,g) = 0.
$$
Moreover $d_{Gp}I^{1}(\omega)(g,h) - d_{Gp}I^1(\omega)(g \conj h,g) = d_RI^1(\omega)(g,h)$, thus $d_RI^1(\omega)(g,h) = 0$, and we see clearly that the rack cocycle identity is implied by the group cocycle identity. We will use this remark in Proposition \ref{identité fantôme de I^2}.  
\end{remark}
\subsubsection{From Leibniz $2$-cocycles to Lie local rack $2$-cocycles}
Let $G$ be a simply connected Lie group, let $U$ be a $1$-neigbourhood in $G$ such that $\log$ is defined on $U$ and let $\aa$ be a representation of $G$. In Proposition \ref{morphisme de la cohomologie de rack locale vers la cohomologie de Leibniz} we have defined for all $n \in \N$ the maps 
$$
HR_{s}^{n}(U,\aa^{a}) \lfldr{[\delta^{n}]} HL^{n}(\gg,\aa^{a}).
$$
In the next section, we will see that a Leibniz algebra can be integrated into a local Lie rack since the morphism  $[\delta^{2}]$ is surjective. More precisely, if we can construct a left inverse for $[\delta^{2}]$, then it gives us an explicit method to construct the local Lie rack which integrates the Leibniz algebra.
\par
In this section, we define a morphism $[I^{2}]$ from $HL^{2}(\gg,\aa^{a})$ to $HR_{s}^{2}(U,\aa^{a})$, and we show that it is a left inverse for $[\delta^{2}]$. To construct the map $[I^{2}]$, we adapt an integration method of Lie algebra cocycles into Lie group cocycles by integration over simplex. This method is due to W.T. Van Est (\cite{VanEst}) and used by K.H. Neeb (\cite{Neeb1,Neeb2}) for the infinite dimensional case. 
\begin{center}
\textbf{Definition of $I^{2}$}
\end{center}
\par
We want to define a map from $ZL^{2}(\gg,\aa^{a})$ to $ZR_{p}^{2}(U,\aa^{a})_{s}$ such that $BL^{2}(\gg,\aa^{a})$ is sent to $BR_{p}^{2}(U,\aa^{a})_{s}$. In the previous section, we have integrated a Leibniz $1$-cocycle on a Lie algebra $\gg$ with coefficients in a symmetric module $\aa^{s}$. In Proposition \ref{isomorphisme entre CLn et CLn-1}, we have shown that there is an isomorphism between $CL^{2}(\gg,\aa^{a})$ and $CL^{1}(\gg,Hom(\gg,\aa)^{s})$, which sends $ZL^{2}(\gg,\aa^{a})$ to $ZL^{1}(\gg,Hom(\gg,\aa)^{s})$ and $BL^{2}(\gg,\aa^{a})$ to $BL^{1}(\gg,Hom(\gg,\aa)^{s})$. Hence, we can define a map
$$
I : ZL^{2}(\gg,\aa^{a}) \to ZR_{p}^{1}(G,Hom(\gg,\aa)^{s})_{s},
$$
which sends $BL^{2}(\gg,\aa^{a})$ into $BR_{p}^{1}(G,Hom(\gg,\aa)^{s})_{s}$.
This is the composition
$$
ZL^{2}(\gg,\aa^{a}) \fldr{\tau^{2}} ZL^{1}(\gg,Hom(\gg,\aa)^{s}) \fldr{I^{1}} ZR_{p}^{1}(G,Hom(\gg,\aa)^{s})_{s}.
$$
Now, we want to define a map from $ZR_{p}^{1}(G,Hom(\gg,\aa)^{s})_{s}$ to $ZR_{p}^{2}(U,\aa^{a})$. Let $\beta \in CR_{p}^{1}(G,Hom(\gg,\aa)^{s})_{s}$, $\beta$ has values in the representation $Hom(\gg,\aa)$, so for all $g \in G$, we can consider the equivariant differential form $\beta(g)^{eq} \in \Omega^{1}(G,\aa)$ defined by 
$$
\beta(g)^{eq}(h)(m) := h.(\beta(g)(T_{h}L_{h^{-1}}(m))).
$$
Then we define an element in $CR_{p}^{2}(U,\aa^{a})$ by setting
$$
f(g,h) = \int_{\gamma_{g \conj h}}(\beta(g))^{eq},
$$
where $\gamma: G \times [0,1] \to G$ is a smooth map such that for all $g \in G$, $\gamma_{g}$ is a path from $1$ to $g$ in $G$ and $\gamma_1 = 1$.\\
For the moment, an element of $ZR_{p}^{1}(G,Hom(\gg,\aa)^{s})_{s}$ is not necessarily sent to an element of $ZR_{p}^{2}(U,\aa^{a})_{s}$. To reach our goal, we have to specify the map $\gamma$, and we define it by setting
$$
\gamma_{g}(s) = \exp(s\log(g)).
$$
Then, we define $I^{2}: ZL^{2}(\gg,\aa^{a}) \to CR_{p}^{2}(U,\aa^{a})_{s}$ by setting for all $(g,h) \in U_{2-loc}$ (cf. notation in Section $2.4$) 
$$
I^{2}(\omega)(g,h) = \int_{\gamma_{g \conj h}}(I(\omega)(g))^{eq}.
$$
By definition, it is clear that $I^2(\omega)(g,1) = I^2(\omega)(1,g) = 0$.
\begin{center}
\textbf{Properties of $I^{2}$}
\end{center}
\begin{proposition} \label{I^2 envoit cocyle sur cocycle}
$I^{2}$ sends $ZL^{2}(\gg,\aa^{a})$ into $ZR_{p}^{2}(U,\aa^{a})_{s}$.  
\end{proposition}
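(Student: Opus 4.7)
The rack $2$-cocycle identity to verify, for anti-symmetric coefficients, reads
$$
g.I^{2}(\omega)(h,k) - I^{2}(\omega)(g \conj h, g \conj k) - (g \conj h).I^{2}(\omega)(g,k) + I^{2}(\omega)(g,h \conj k) = 0
$$
for all $(g,h,k) \in U_{3-loc}$. My strategy follows the Cartan--Van Est approach used in the proof of Proposition \ref{I^1 envoit cocycle sur cocycle}: reorganize the four terms into the integral of the closed $Hom(\gg,\aa)$-valued $1$-form $\eta^{eq}$ (where $\eta := \tau^{2}(\omega)$) over the boundary of a suitable singular $3$-chain in $G$, then invoke Stokes' theorem.

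The first move uses the self-distributivity $g \conj (h \conj k) = (g \conj h) \conj (g \conj k)$: the paths of integration in the second and fourth terms become literally identical, so those two terms combine to $\int_{\gamma_{g \conj (h \conj k)}} (\beta(g) - \beta(g \conj h))^{eq}$, where $\beta := I(\omega)$. The rack $1$-cocycle identity for $\beta$ established in Proposition \ref{I^1 envoit cocycle sur cocycle}, namely $\beta(g \conj h) = \beta(g) + g.\beta(h) - (g \conj h).\beta(g)$, rewrites the integrand as $(-g.\beta(h) + (g \conj h).\beta(g))^{eq}$. The left-equivariance identity $L_{g}^{*}\alpha^{eq} = g.\alpha^{eq}$, valid for every $\alpha \in Hom(\gg,\aa)$, then allows me to absorb the $G$-actions outside the first and third integrals into translations of the paths, via $g.\int_{\gamma}\alpha^{eq} = \int_{g\gamma}\alpha^{eq}$. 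After these two moves, the whole expression $d_{R}^{2} I^{2}(\omega)(g,h,k)$ has been rewritten as a signed sum of four integrals of $\beta(g)^{eq}$ and $\beta(h)^{eq}$ along translated paths in $G$.

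Next I substitute $\beta(x) = \int_{\alpha_{x}} \eta^{eq}$ with $\alpha_{x}(s) = \exp(s \log x)$, which promotes each of the four $1$-dimensional integrals to a $2$-dimensional integral of $\eta^{eq}$ over a singular $2$-chain of the form $(s,t) \mapsto \gamma_{\bullet}(t) \alpha_{\bullet}(s)$, suitably left-translated. Following the "$3$-cube" picture indicated in the introduction, I then build a smooth singular $3$-chain $\Sigma_{g,h,k} : [0,1]^{3} \to G$ whose boundary, with the correct signs and orientations, coincides with the signed sum of those four $2$-chains; such a $\Sigma_{g,h,k}$ exists inside the neighborhood $U$ because $\log$ trivializes the topology there. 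Stokes then gives $\int_{\partial \Sigma_{g,h,k}} \eta^{eq} = \int_{\Sigma_{g,h,k}} d_{dR}\eta^{eq}$, and the right-hand side vanishes because $\eta \in ZL^{1}(\gg, Hom(\gg,\aa)^{s})$ is equivalent to $d_{dR}\eta^{eq} = 0$ (via the same computation that identifies Leibniz $1$-cocycles with closed equivariant forms in Proposition \ref{morphisme de la cohomologie de rack vers la cohomologie de Leibniz}).

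The hard part will be the explicit matching of $\partial \Sigma_{g,h,k}$ with the four $2$-chains produced by the first paragraph's reorganization: each of the six faces of the $3$-cube must fit on the nose, with the correct $G$-translations absorbed and with the correct orientations. This is where the rack identity (rather than an associative group identity) forces the use of a $3$-cube rather than a $3$-simplex, and where the explicit choice $\gamma_{x}(s) = \exp(s \log x)$, which keeps $g \conj h = ghg^{-1}$ smooth in all three slots, is essential for all six faces to be parametrizable by the same formula.
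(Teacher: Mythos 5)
Your first two moves are exactly right and coincide with the paper's: self-distributivity identifies the integration paths of the second and fourth terms, and the rack $1$-cocycle identity for $\beta = I(\omega)$ from Proposition \ref{I^1 envoit cocycle sur cocycle} is indeed the engine of the whole argument. But the way you plan to finish does not work, and it is also unnecessary. The dimensional bookkeeping of your Stokes step is off: $\eta^{eq}$ is a $1$-form, so $\int_{\partial\Sigma_{g,h,k}}\eta^{eq}$ over the boundary of a $3$-chain is not defined. To get a $2$-dimensional integrand you would have to convert the iterated integrals of $\eta^{eq}$ into surface integrals of the equivariant $2$-form $\omega^{eq}$; but $\omega$ is only a Leibniz cocycle, not antisymmetric, so $\omega^{eq}$ does not exist as a differential form (this is precisely the obstruction that the detour through $\tau^{2}$ is designed to avoid), and even when it does exist its closedness is the Chevalley--Eilenberg cocycle condition, not the Leibniz one. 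Moreover, your choice of left translation, $g.\int_{\gamma}\alpha^{eq} = \int_{g\gamma}\alpha^{eq}$, moves the first and third integrals onto the paths $g\gamma_{h \conj k}$ and $(g \conj h)\gamma_{g \conj k}$, which do not coincide with $\gamma_{g \conj (h \conj k)}$; the four terms then no longer cancel pointwise and you are pushed into the chain-homotopy matching that you describe in your last paragraph but do not carry out.

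The identity you are missing is the conjugation-equivariance $g.(\alpha^{eq}) = c_{g}^{*}\big((g.\alpha)^{eq}\big)$ for $\alpha \in Hom(\gg,\aa)$, where $g.\alpha$ denotes the action on $Hom(\gg,\aa)$ (twisting the argument by $Ad_{g^{-1}}$). Combined with the naturality $c_{g}\circ\gamma_{x} = \gamma_{g \conj x}$ --- valid because $\gamma_{x}(s)=\exp(s\log x)$ and $\exp$, $\log$ intertwine conjugation --- it transports the first term to $\int_{\gamma_{g \conj (h \conj k)}}(g.I(\omega)(h))^{eq}$ and the third to $\int_{\gamma_{g \conj (h \conj k)}}((g \conj h).I(\omega)(g))^{eq}$. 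All four integrals then live on the single path $\gamma_{g \conj (h \conj k)}$ with total integrand $\big(d_{R}(I(\omega))(g,h)\big)^{eq}$, which vanishes because $I(\omega)$ is a rack $1$-cocycle. No $3$-chain is needed at this stage: the Stokes argument has already been spent, once and for all, inside Proposition \ref{I^1 envoit cocycle sur cocycle}.
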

\begin{proof} Let $\omega \in ZL^{2}(\gg,\aa^{a})$ and $(g,h,k) \in U_{3-loc}$. We have
\begin{align*}
d_{R}(I^{2}(\omega))(g,h,k) &= g.I^{2}(\omega)(h,k) - I^{2}(\omega)(g \conj h,g \conj k) - (g \conj h).I^{2}(\omega)(g,k) + I^{2}(\omega)(g,h \conj k)\\
			     		    &= \int_{\gamma_{h \conj k}}g.((I(\omega)(h))^{eq}) - \int_{\gamma_{g \conj (h \conj k)}}(I(\omega)(g \conj h))^{eq} - \int_{\gamma_{g \conj k}}(g \conj h).((I(\omega)(g))^{eq}) \\
					    & + \int_{\gamma_{g \conj (h \conj k)}}(I(\omega)(g))^{eq}.
\end{align*}
For all $g \in G$ we have $g.(\omega^{eq}) = c_{g}^{*}((g.\omega)^{eq})$, thus
%
%
\begin{align*}					  
d_{R}(I^{2}(\omega))(g,h,k) 
			     		   &= \int_{c_{g} \circ \gamma_{h \conj k}}(g.I(\omega)(h)^{eq} - \int_{\gamma_{g \conj (h \conj k)}}I(\omega)(g \conj h)^{eq} - \int_{c_{g \conj h} \circ \gamma_{g \conj k}}((g \conj h).I(\omega)(g))^{eq}\\
					   &+\int_{\gamma_{g \conj (h \conj k)}}I(\omega)(g)^{eq}.
\end{align*}
By naturality of the exponantial and the logarithm, we have for all $(g,h) \in U_{2-loc} \, \gamma_{g \rhd h} = g \rhd \gamma_h$, thus
%
%
\begin{align*}
d_{R}(I^{2}(\omega))(g,h,k) 
			     		    &= \int_{\gamma_{g \conj (h \conj k)}}d_{R}(I(\omega))(g,h) = 0.   
\end{align*}
Hence $ZL^{2}(\gg,\aa^{a})$ is sent to $ZR_{p}^{2}(U,\aa^{a})_{s}$.
\end{proof}
\begin{proposition}
$I^{2}$ sends $BL^{2}(\gg,\aa^{a})$ into $BR_{p}^{2}(U,\aa^{a})_{s}$.
\end{proposition}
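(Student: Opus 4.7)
The plan is to unwind $I^{2}$ on a Leibniz coboundary and recognise the result as the rack coboundary $d_{R}^{1}\phi$ for an explicit $\phi\in CR_{p}^{1}(U,\aa^{a})_{s}$.

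First I would start with $\omega=dL^{1}\alpha\in BL^{2}(\gg,\aa^{a})$, where $\alpha\in\mathrm{Hom}(\gg,\aa)$. Because $\aa^{a}$ is anti-symmetric, the right bracket vanishes and
\[
\omega(x,y)=x.\alpha(y)-\alpha([x,y]).
\]
I would then note that, under the isomorphism of Proposition \ref{isomorphisme entre CLn et CLn-1}, this is precisely $\tau^{2}(\omega)(x)=x.\alpha$, which is exactly $dL^{0}\alpha$ inside $CL^{1}(\gg,\mathrm{Hom}(\gg,\aa)^{s})$. Thus $\tau^{2}(\omega)$ is a Leibniz $1$-coboundary with coefficients in the symmetric representation $\mathrm{Hom}(\gg,\aa)^{s}$.

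Next, I would apply Proposition \ref{I^1 envoit cocycle sur cocycle}, which says $I^{1}$ sends coboundaries to coboundaries, together with the explicit formula from that proof $I^{1}(dL\beta)(g)=g.\beta-\beta$. Taking $\beta=\alpha$ gives $I(\omega)(g)=I^{1}(\tau^{2}(\omega))(g)=g.\alpha-\alpha$. Substituting into the definition of $I^{2}$ and using linearity of $\beta\mapsto\beta^{eq}$, one gets
\[
I^{2}(\omega)(g,h)=\int_{\gamma_{g\conj h}}(g.\alpha)^{eq}-\int_{\gamma_{g\conj h}}\alpha^{eq}.
\]

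Now I would propose the primitive $\phi(g):=\displaystyle\int_{\gamma_{g}}\alpha^{eq}$; it is smooth in a $1$-neighbourhood (since $\gamma_{g}(s)=\exp(s\log g)$ depends smoothly on $g$) and satisfies $\phi(1)=0$. The second integral above is then immediately $\phi(g\conj h)$. For the first integral I would use the two ingredients already established in the proof of Proposition \ref{I^2 envoit cocyle sur cocycle}: naturality of $\exp$ and $\log$, giving $\gamma_{g\conj h}=c_{g}\circ\gamma_{h}$, together with the equivariance identity $c_{g}^{*}((g.\alpha)^{eq})=g.\alpha^{eq}$. A change of variables followed by pulling the linear $G$-action out of the integral then yields
\[
\int_{\gamma_{g\conj h}}(g.\alpha)^{eq}=\int_{\gamma_{h}}c_{g}^{*}((g.\alpha)^{eq})=\int_{\gamma_{h}}g.\alpha^{eq}=g.\phi(h).
\]
Combining these gives $I^{2}(\omega)(g,h)=g.\phi(h)-\phi(g\conj h)$, which is exactly $d_{R}^{1}\phi(g,h)$ in the anti-symmetric rack complex (where $\psi_{g,h}=0$). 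Hence $I^{2}(\omega)\in BR_{p}^{2}(U,\aa^{a})_{s}$.

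The main obstacle is really only bookkeeping: correctly identifying $\tau^{2}(\omega)$ as a coboundary for the induced symmetric action on $\mathrm{Hom}(\gg,\aa)$ (so that one can invoke the previous proposition) and then making sure the two pullback/equivariance identities are applied in the right direction. Once those are in place the computation is essentially forced by the definitions.
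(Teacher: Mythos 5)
Your proof is correct and follows essentially the same route as the paper: both use that $\tau$ is a chain map to write $\tau^{2}(\omega)$ as a Leibniz $1$-coboundary in the symmetric module $\mathrm{Hom}(\gg,\aa)^{s}$, invoke the coboundary formula $I^{1}(d_L\beta)(g)=g.\beta-\beta$, and then apply the identities $\gamma_{g\conj h}=c_{g}\circ\gamma_{h}$ and $c_{g}^{*}((g.\alpha)^{eq})=g.(\alpha^{eq})$ to identify $I^{2}(\omega)$ with $d_{R}(I^{1}(\alpha))$. Your primitive $\phi(g)=\int_{\gamma_{g}}\alpha^{eq}$ is exactly the paper's $I^{1}(\beta)$, so the two arguments coincide.
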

\begin{proof}  Let $\omega \in BL^{2}(\gg,\aa^{a})$, there exists an element $\beta \in CL^{1}(\gg,\aa^{a})$ such that $\omega = d_L\beta$. By definition $\displaystyle I(\omega)(g) = \int_{\gamma_{g}}(\tau^{2}(d_L\beta))^{eq}$, and because $\{\tau^{n}\}_{n \in \N}$ is a morphism of cochain complexes
\begin{align*}
I(\omega)(g) &= I^{1}(\tau^{2}(\omega))(g) = \int_{\gamma_{g}}(\tau^{2}(\omega))^{eq} = \int_{\gamma_{g}}(\tau^{2}(d_L\beta))^{eq}. 
\end{align*}
%
%
%
Let $(g,h) \in U_{2-loc}$. Using the same kind of computation as in the proof of Proposition \ref{I^2 envoit cocyle sur cocycle} we find
\begin{align*}
I_{2}(\omega)(g,h) &= \int_{\gamma_{g \conj h}} (I(\omega)(g))^{eq} = g.\int_{\gamma_{h}} \beta^{eq} - \int_{\gamma_{g \conj h}} \beta^{eq} = d_{R}(I^{1}(\beta))(g,h)
\end{align*}
Hence $BL^{2}(\gg,\aa^{a})$ is sent to $BR^{2}_{p}(U,\aa)_{s}$.
\end{proof}
\begin{proposition} \label{I2 inverse à gauche de D2}
$I^{2}$ is a left inverse for $\delta^{2}$.
\end{proposition}
\begin{proof} Let $x,y \in \gg$, and $I_{x}$ (resp $I_{y}$) be an interval in $\mathbb{R}$ such that $\epsilon_{x}(s)=\exp(sx)$ (resp $\epsilon_{y}(s)=\exp(sy)$) be defined for all $s \in I_{x}$ (resp for all $s \in I_{y}$). The map $\epsilon_{x} \conj \epsilon_{y}:I_{x} \times I_{y} \to G$ is continuous, thus there exists $W$ an open subset of $I_{x} \times I_{y}$ such that $(\epsilon_{x} \conj \epsilon_{y})(W) \subseteq U$. Hence there exists an interval $J \subseteq I_{x} \cap I_{y}$ such that $\epsilon_{x}(s) \conj \epsilon_{y}(t) \in U$ for all $(s,t) \in J \times J$.\\
We have to prove 
$$
\delta^{2} \circ I^{2} = id.
$$
Let $\omega \in ZL^{2}(\gg,\aa^{a})$. By definition
\begin{align*}
\delta^{2}(I^{2}(\omega))(x,y) &= \left.\frac{\partial^{2}}{\partial s \partial t}\right|_{s,t=0}I^{2}(\omega)(\epsilon_{x}(s),\epsilon_{y}(s)) = \left.\frac{\partial^{2}}{\partial s \partial t}\right|_{s,t=0}\int_{\gamma_{\epsilon_{x}(s) \conj \epsilon_{y}(t)}} (I(\omega)(\epsilon_{x}(s)))^{eq}\\
			  		 &= \left.\frac{\partial}{\partial s}\right|_{s=0}(\left.\frac{\partial}{\partial 
			      t}\right|_{t=0}\int_{\gamma_{\epsilon_{y}(t)}} c_{\epsilon_{x}(s)}^{*}(I(\omega)(\epsilon_{x}(s)))^{eq}).
\end{align*}
First, we compute $
\left.\frac{\partial}{\partial t}\right|_{t=0}\int_{\gamma_{\epsilon_{y}(t)}} c_{\epsilon_{x}(s)}^{*}(I(\omega)(\epsilon_{x}(s)))^{eq}$. For the sake of clarity, we put $\alpha = c_{\epsilon_{x}(s)}^{*}(I(\omega)(\epsilon_{x}(s)))^{eq}$ and $\beta_{t} = \gamma_{\epsilon_{y}(t)}$. We have
\begin{align*}
\left.\frac{\partial}{\partial t}\right|_{t=0}\int_{\beta} \alpha &= \left.\frac{\partial}{\partial t}\right|_{t=0}\int_{[0,1]} \beta^{*}\alpha = \left.\frac{\partial}{\partial t}\right|_{t=0}\int_{[0,1]} f_{t}(r)dr = \int_{[0,1]}\left.\frac{\partial}{\partial t}\right|_{t=0} f_{t}(r)dr,
\end{align*}
where $f_{t}(r) = \alpha(\beta_{t}(r))(\beta_{t}'(r))$.\\
We have 
$$
\left.\frac{\partial}{\partial t}\right|_{t=0}f_{t}(r) = (\left.\frac{\partial}{\partial t}\right|_{t=0}\alpha(\beta_{t}(r)))\beta_{0}'(r) + (\alpha(\beta_{0}(r)))(\left.\frac{\partial}{\partial t}\right|_{t=0}\beta_{t}'(r)).
$$
Moreover, $\alpha(\beta_{0}(r)) = \alpha(1), \, \beta_{0}'(r) = 0$, and $\left.\frac{\partial}{\partial t}\right|_{t=0}\beta_{t}'(r) = y$. So $\left.\frac{\partial}{\partial t}\right|_{t=0}\int_{\beta} \alpha = \int_{[0,1]}\alpha(1)(x)dr = \alpha(1)(y)$ and $
\delta^{2}(I^{2}(\omega))(x,y) = \left.\frac{\partial}{\partial s}\right|_{s=0}(c_{\epsilon_{x}(s)}^{*}(I(\omega)(\epsilon_{x}(s)))^{eq})(1)(y)$.
Furthermore we have
\begin{align*}
c_{\epsilon_{x}(s)}^{*}(I(\omega)(\epsilon_{x}(s))^{eq})(1)(y) &= (I(\omega)(\epsilon_{x}(s)))^{eq}(c_{\epsilon_{x}(s)}(1))(Ad_{\epsilon_{x}(s)}(y))\\
						       						&= I(\omega)(\epsilon_{x}(s))(Ad_{\epsilon_{x}(s)}(y))\\
						       						&= (\int_{\gamma_{\epsilon_{x}(s)}}\tau^{2}(\omega)^{eq})(Ad_{\epsilon_{x}(s)}(y)).
\end{align*}
If we put $\int_{\gamma_{\epsilon_{x}(s)}}\tau^{2}(\omega)^{eq} = \sigma(s)$ and $Ad_{\epsilon_{x}(s)}(y) = \lambda(s)$, we have
\begin{align*}
\left.\frac{\partial}{\partial s}\right|_{s=0}((\int_{\gamma_{\epsilon_{x}(s)}}\tau^{2}(\omega)^{eq})(Ad_{\epsilon_{x}(s)}(y))) &= \left.\frac{\partial}{\partial s}\right|_{s=0}\sigma(s)(\lambda(s)) = \sigma'(0)(\lambda(0))+\sigma(0)(\lambda'(0)). 
\end{align*}
We have $\sigma(0) = 0, \, \lambda(0) = y$, and $\sigma'(0) = \tau^{2}(\omega)(x)$. Thus
$$
\left.\frac{\partial}{\partial s}\right|_{s=0}\Big(\big(\int_{\gamma_{\epsilon_{x}(s)}}\tau^{2}(\omega)^{eq}\big)\big(Ad_{\epsilon_{x}(s)}(y)\big)\Big)=\tau^{2}(\omega)(x)(y).
$$
Hence $\delta^{2}(I^{2}(\omega))(x,y) = \omega(x,y)$.
\end{proof}
\begin{remark} Suppose that we have a Leibniz 2-cocycle $\omega$ which is also a Lie 2-cocycle. In this case, we can integrate $\omega$ into a local Lie rack cocycle, but also into a local Lie group cocycle (cf. Introduction). Then it is natural to ask if the two constructions are related to each other.
\end{remark}
\begin{proposition}
Let $G$ be a Lie group, $\gg$ be its Lie algebra, $\aa$ be a representation of $G$, $\omega \in \Lambda^{2}(\gg,\aa)$ and $\gamma_1,\gamma_2$ smooth paths in $G$ pointed in $1$. Then 
$$
\int_{\gamma_{1}}\big(\int_{\gamma_{2}} (\tau^{2}(\omega))^{eq}\big)^{eq} = \int_{\gamma_{1}\gamma_{2}} \omega^{eq}
$$
where $\gamma_1\gamma_2 : [0,1]^{2} \to G; (s,t) \mapsto \gamma_1(t)\gamma_2(s)$. 
\end{proposition}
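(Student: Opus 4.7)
The plan is to unfold both sides into explicit double integrals over $[0,1]^{2}$ and observe termwise equality. Let $\theta$ denote the left Maurer-Cartan form on $G$ and write $\theta_{i}(r) := T_{\gamma_{i}(r)}L_{\gamma_{i}(r)^{-1}}(\gamma_{i}'(r)) \in \gg$ for $i=1,2$. Recall from Proposition \ref{isomorphisme entre CLn et CLn-1} that $\tau^{2}(\omega)(x)(y) = \omega(x,y)$, and note that the $G$-action on $Hom(\gg,\aa)^{s}$ integrating the Lie action $(x \cdot \alpha)(y) = x \cdot \alpha(y) - \alpha([x,y])$ is $(g \cdot \alpha)(y) = g \cdot \alpha(Ad_{g^{-1}}(y))$.

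For the left-hand side, the inner integral $\beta := \int_{\gamma_{2}}(\tau^{2}(\omega))^{eq}$ is the element of $Hom(\gg,\aa)$ whose value at $y \in \gg$ is, unpacking the two formulas above,
\[
\beta(y) = \int_{0}^{1} \gamma_{2}(s) \cdot \omega\bigl(\theta_{2}(s),\, Ad_{\gamma_{2}(s)^{-1}}(y)\bigr)\,ds.
\]
Its equivariant $\aa$-valued $1$-form is $\beta^{eq}(h)(m) = h \cdot \beta(\theta(h)(m))$. Integrating along $\gamma_{1}$ and collapsing the successive actions of $\gamma_{1}(t)$ and $\gamma_{2}(s)$ into the single action of their product, I obtain
\[
\text{LHS} = \int_{0}^{1}\!\!\int_{0}^{1} \gamma_{1}(t)\gamma_{2}(s) \cdot \omega\bigl(\theta_{2}(s),\, Ad_{\gamma_{2}(s)^{-1}}(\theta_{1}(t))\bigr)\,ds\,dt.
\]

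For the right-hand side, I would compute $(\gamma_{1}\gamma_{2})^{*}\omega^{eq}$ at $(s,t)$. The two partial derivatives of $(s,t) \mapsto \gamma_{1}(t)\gamma_{2}(s)$ are $T_{\gamma_{2}(s)}L_{\gamma_{1}(t)}(\gamma_{2}'(s))$ and $T_{\gamma_{1}(t)}R_{\gamma_{2}(s)}(\gamma_{1}'(t))$. Applying $\theta$ at $\gamma_{1}(t)\gamma_{2}(s)$, the first collapses to $\theta_{2}(s)$ via the identity $L_{\gamma_{2}(s)^{-1}\gamma_{1}(t)^{-1}} \circ L_{\gamma_{1}(t)} = L_{\gamma_{2}(s)^{-1}}$; the second collapses to $Ad_{\gamma_{2}(s)^{-1}}(\theta_{1}(t))$ since $L_{\gamma_{2}(s)^{-1}\gamma_{1}(t)^{-1}} \circ R_{\gamma_{2}(s)}$ factors at $\gamma_{1}(t)$ as $L_{\gamma_{1}(t)^{-1}}$ followed by the conjugation $c_{\gamma_{2}(s)^{-1}}$, whose differential at $1$ is $Ad_{\gamma_{2}(s)^{-1}}$. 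By definition of $\omega^{eq}$ the RHS integrand becomes $\gamma_{1}(t)\gamma_{2}(s) \cdot \omega(\theta_{2}(s),\, Ad_{\gamma_{2}(s)^{-1}}(\theta_{1}(t)))$, which matches the LHS integrand, and Fubini concludes.

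The only real potential obstacle is bookkeeping with conventions: keeping the $Ad_{g^{-1}}$ twist on the argument straight when forming the equivariant form on the symmetric representation $Hom(\gg,\aa)^{s}$, and correctly computing the Maurer-Cartan pullback of the $t$-direction (where the right translation by $\gamma_{2}(s)$ is precisely what produces the $Ad_{\gamma_{2}(s)^{-1}}$). Once both sides are reduced to their common integrand, equality is immediate.
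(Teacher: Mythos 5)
Your proof is correct and follows essentially the same route as the paper: unfold both sides into double integrals over $[0,1]^{2}$, check that the Maurer--Cartan pullback of the $t$-direction of $(s,t)\mapsto\gamma_1(t)\gamma_2(s)$ produces exactly the $Ad_{\gamma_2(s)^{-1}}$ twist coming from the symmetric $G$-action on $Hom(\gg,\aa)$, and conclude by Fubini. Your version is in fact slightly more explicit than the paper's about the two left/right-translation identities that make the integrands coincide.
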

\begin{proof} On the one hand, we have
\begin{align*}
\int_{\gamma_{1}\gamma_{2}} \omega^{eq} &= \int_{[0,1]^2}(\gamma_1\gamma_2)^*\omega^{eq} = \int_{[0,1]^2}\omega^{eq}(\gamma_1\gamma_2)(\dpartt{s}\gamma_1(t)\gamma_2(s),\dpartt{t}\gamma_1(t)\gamma_2(s))dsdt, 
\end{align*}
and this expression is equal to 
\begin{eqnarray}
\int_{[0,1]^2} \gamma_1(t)\gamma_2(s).\omega\big(d_{\gamma_2(s)}L_{\gamma_2(s)^{-1}}(\dpartt{s}\gamma_2(s)),Ad_{\gamma_2(s)^{-1}}(d_{\gamma_1(t)}L_{\gamma_1(t)^{-1}}(\dpartt{t}\gamma_1(t)))\big)dsdt. \label{expression 1}
\end{eqnarray}
On the other hand, we have
\begin{align*}
\int_{\gamma_{1}}\big(\int_{\gamma_{2}} (\tau^{2}(\omega))^{eq}\big)^{eq} &= \int_{[0,1]}\gamma_1^*(\int_{\gamma_2}(\tau^2(\omega)^{eq})^{eq}) \\
														     &= \int_{[0,1]}\gamma_1(t).(\int_{[0,1]}(\tau^2(\omega))^{eq}(\gamma_2(s))(\dpartt{s}\gamma_2(s)))(d_{\gamma_1(t)}L_{\gamma_1(t)^{-1}}(\dpartt{t}\gamma_1(t)))dt.
\end{align*}
and this expression is equal to
$$
\int_{[0,1]}\gamma_1(t).(\int_{[0,1]}\gamma_2(s).\omega(d_{\gamma_2(s)}L_{\gamma_2(s)^{-1}}(\dpartt{s}\gamma_2(s)), Ad_{\gamma_2(s)^{-1}}( \cdot ))ds)(d_{\gamma_1(t)}L_{\gamma_1(t)^{-1}}(\dpartt{t}\gamma_1(t)))dt.
$$
Using the Fubini theorem we see that this expression is equal to \eqref{expression 1}.
\end{proof}
If we apply this result to the case where $\gamma_{1}(s) = \gamma_{g \conj h}(s) = \exp(s\log(g \conj h))$ and $\gamma_{2}(s) = \gamma_{g}(s) = \exp(s\log(g))$ for $(g,h) \in U_{2-loc}$, then we obtain the folowing corollary.
\begin{corollary} \label{cocycle de rack venant d'un cocycle de groupe}
If $\omega \in ZL^{2}(\gg,\aa^{a}) \cap Z^{2}(\gg,\aa)$, then for all $g,h \in U_{2-loc}$ such that $gh \in U_{2-loc}$ we have 
\begin{align}
I^{2}(\omega)(g,h) = \iota^{2}(\omega)(g,h) - \iota^{2}(\omega)(g \conj h,g), \label{formule reliant I^{2} et iota^2}
\end{align}
with
$$
\iota^2(\omega)(g,h) = \int_{\gamma_{g,h}} \omega^{eq},
$$
and where $\gamma_{g,h}$ is a smooth singular $2$-chain in $G$ such that $\partial\gamma_{g,h} = \gamma_g - \gamma_{gh} + g\gamma_h$.
\end{corollary}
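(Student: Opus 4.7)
The plan is to reduce the identity to the proposition immediately above. Taking $\gamma_1 = \gamma_{g \conj h}$ and $\gamma_2 = \gamma_g$, and unraveling $I(\omega)(g) = I^{1}(\tau^{2}(\omega))(g) = \int_{\gamma_g} \tau^{2}(\omega)^{eq}$, the proposition gives
$$
I^{2}(\omega)(g,h) \;=\; \int_{\gamma_{g \conj h}}\bigl(I(\omega)(g)\bigr)^{eq} \;=\; \int_{\gamma_{g \conj h}\,\gamma_g} \omega^{eq},
$$
where $\gamma_{g \conj h}\,\gamma_g\colon [0,1]^{2} \to G$ is the smooth singular $2$-chain $(s,t) \mapsto \gamma_{g \conj h}(t)\,\gamma_g(s)$. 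It therefore suffices to show that
$$
\int_{C} \omega^{eq} \;=\; 0, \qquad\text{where}\quad C \;:=\; \gamma_{g \conj h}\,\gamma_g \;-\; \gamma_{g,h} \;+\; \gamma_{g \conj h,\,g}.
$$

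I would then verify that $C$ is a cycle. Triangulating $[0,1]^2$ into two $2$-simplices, the boundary of $\gamma_{g \conj h}\,\gamma_g$ in the smooth singular chain complex is the alternating sum of its four edges, which (using the standard relation $\bar\sigma \sim -\sigma$ for a reversed $1$-simplex) reads
$$
\partial(\gamma_{g \conj h}\,\gamma_g) \;\sim\; \gamma_g \;+\; \gamma_{g \conj h}(\cdot)\,g \;-\; (g \conj h)\gamma_g \;-\; \gamma_{g \conj h}.
$$
Combining this with $-\partial\gamma_{g,h} = -\gamma_g + \gamma_{gh} - g\gamma_h$ and, using $(g \conj h)g = gh$, with $\partial\gamma_{g \conj h,\,g} = \gamma_{g \conj h} - \gamma_{gh} + (g \conj h)\gamma_g$, every term cancels except
$$
\partial C \;\sim\; \gamma_{g \conj h}(\cdot)\,g \;-\; g\,\gamma_h.
$$
The crucial observation is that these two singular $1$-simplices are \emph{literally equal}, not merely homologous: because $g \conj h = ghg^{-1}$, the naturality $\log(g \conj h) = \mathrm{Ad}_g \log h$ yields
$$
\gamma_{g \conj h}(t)\,g \;=\; \exp\!\bigl(t\,\mathrm{Ad}_g \log h\bigr)\,g \;=\; g\exp(t \log h)\,g^{-1}g \;=\; g\,\gamma_h(t).
$$
Hence $\partial C = 0$.

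To finish, note that $\omega \in Z^{2}(\gg,\aa)$ implies the left-equivariant form $\omega^{eq} \in \Omega^{2}(G,\aa)$ is closed. The group $G$ is finite-dimensional and simply connected, so $\pi_{2}(G) = 0$ and Hurewicz gives $H_{2}(G,\R) = 0$; the cycle $C$ is therefore the boundary of a smooth singular $3$-chain $D$, and Stokes' theorem yields
$$
\int_{C} \omega^{eq} \;=\; \int_{\partial D} \omega^{eq} \;=\; \int_{D} d_{dR}\omega^{eq} \;=\; 0.
$$
Rearranging gives exactly the desired formula.

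The main obstacle is the careful boundary bookkeeping in the middle step: one must recognise that after all the simplices cancel, the single leftover discrepancy $\gamma_{g \conj h}(\cdot)\,g - g\,\gamma_h$ between the two residual edges is not merely homologous to zero but vanishes as a singular chain, thanks to the $\exp/\log$ equivariance $\log \circ c_g = \mathrm{Ad}_g \circ \log$. Without this pointwise identity one would only get $\partial C \sim 0$ up to a non-trivial boundary, and the closedness of $\omega^{eq}$ alone would not suffice.
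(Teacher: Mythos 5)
Your proof is correct and follows exactly the route the paper intends: it applies the preceding proposition with $\gamma_1=\gamma_{g\conj h}$, $\gamma_2=\gamma_g$ and then closes the gap homologically using $H_2(G)=0$ and the closedness of $\omega^{eq}$. The paper states the corollary without writing out this bookkeeping; your explicit verification that the leftover edge $\gamma_{g\conj h}(\cdot)g - g\gamma_h$ vanishes identically via $\gamma_{g\conj h}=g\conj\gamma_h$ (the same naturality of $\exp/\log$ the paper invokes in Proposition \ref{I^2 envoit cocyle sur cocycle}) is precisely the missing detail, correctly supplied.
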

\par
We can remark that $I^{2}$ is more than a local Lie rack cocycle. Precisely, if $\omega$ is in $ZL^{2}(\gg,\aa^{a})$ then the local rack cocycle identity satisfied by $I^{2}(\omega)$, comes from another identity satisfied by $I^{2}(\omega)$. Indeed, $I^{2}$ is defined using $I$, and to verify that $I^{2}$ sends Leibniz cocycles into local rack cocycles, we have used Proposition \ref{I^1 envoit cocycle sur cocycle}. This proposition establishes that $I^1$ sends Lie cocycles into rack cocycle. But, we have remarked (Remark \ref{I^1 est un cocycle de groupe}) that the rack cocycle identity satisfied by $I^1(\omega)$, comes from the group cocycle identity. Hence, we can think that we forgot structure on $I^{2}(\omega)$. The following proposition points out the identity satisfied by $I^{2}(\omega)$ which induced the local rack identity.
\begin{proposition} \label{identité fantôme de I^2} 
If $\omega \in ZL^{2}(\gg,\aa^{a})$, then $I^{2}(\omega)$ satisfies the identity
$$
g.I^{2}(\omega)(h,k) - I^{2}(\omega)(gh,k) + I^{2}(\omega)(g,h \conj k) = 0, \ \ \forall (g,h,k) \in U_{3-loc}. 
$$
Moreover, this identity induces the local rack cocycle identity. 
\end{proposition}
\begin{proof} Let $\omega \in ZL^{2}(\gg,\aa^{a})$ and $(g,h,k) \in U_{3-loc}$. Using the same kind of computation as in the proof of Proposition \ref{I^2 envoit cocyle sur cocycle} we find :
\begin{align*}
g.I^{2}(\omega)(h,k) - I^{2}(\omega)(gh,k) + I^{2}(\omega)(g,h \conj k) &= \int_{\gamma_{g \conj (h \conj k)}}d(I(\omega))(g,h) = 0.
\end{align*}
Let $(g,h,k) \in U_{3-loc}$. Denote the expression $g.I^{2}(\omega)(h,k) - I^{2}(\omega)(gh,k) + I^{2}(\omega)(g,h \conj k)$ by $b(I^{2}(\omega))(g,h,k))$. Inserting $-I^2(\omega)(gh,k) + I^2(\omega)(gh,k)$ in the formula for $d_{R}(I^{2}(\omega))(g,h,k)$, we find 
$$
d_{R}(I^{2}(\omega))(g,h,k) = b(I^{2}(\omega))(g,h,k))-b(I^{2}(\omega)(g \rhd h,g,k)).
$$ 

\end{proof}
We will see in the next section that this identity makes it possible to integrate a Leibniz algebra into a local augmented Lie rack.
\subsection{From Leibniz algebras to local Lie racks}
In this section, we present the main theorem of this article. In Proposition \ref{l'espace tangent d'un rack est Leibniz} we have seen that the tangent space at $1$ of a (local) Lie rack is provided with a Leibniz algebra structure. Conversely, we now show that every Leibniz algebra can be integrated into an augmented local Lie rack. Our construction is explicit, and by this construction, a Lie algebra is integrated into a Lie group. Conversely, we show that an augmented local Lie rack whose tangent space at $1$ is a Lie algebra is necessarily a (local) Lie group. That is, there is a structure of Lie group on this augmented local Lie rack, and the conjugation on the augmented local Lie rack is the conjugation in the group.
\vskip 0.2cm
The idea of the proof is simple and uses the knowledge of the Lie's first theorem and Lie's second theorem. Let $\gg$ be a Leibniz algebra. First, we decompose the vector space $\gg$ into a direct sum of Leibniz algebras $\gg_0$ and $\aa$ that we know how to integrate. As we know the theorem for Lie subalgebras of endomorphisms of a finite dimensional vector space $V$, the factors are integrable if $\gg$ is isomorphic (as a vector space) to an abelian extension of a Lie subalgebra $\gg_0$ of $End(V)$ by a $\gg_0$-representation $\aa$. Hence $\gg$ is isomorphic to $\aa \oplus_{\omega} \gg_0$,  the Leibniz algebra $\aa$ is abelian so becomes integrated into $\aa$, and $\gg_0$ is a Lie subalgebra of $End(V)$ so becomes integrated into a simply connected Lie subgroup $G_0$ of $Aut(V)$. Now, we have to understand how to patch $\aa$ and $G_0$. That is, we have to understand how the gluing data $\omega$ becomes integrated into a gluing data $f$ between $\aa$ and $G_0$. It is the local Lie rack cocycle $I^{2}(\omega)$, constructed in the preceding section, which answers this question. Hence, we showed that a Leibniz algebra $\gg$ becomes integrated into a local Lie rack of the form $\aa \times_{f} G_0$.
\vskip 0.2cm
Let $\gg$ be a Leibniz algebra, there are several ways to see $\gg$ as an abelian extension of a Lie subalgebra $\gg_0$ of $End(V)$ by a $\gg_0$-representation $\aa$. Here, we take the abelian extension associated to the (left) center of $\gg$. By definition the left center is 
$$
Z_L(\gg) = \{x \in \gg \, | \, [x,y] = 0 \ \ \forall y \in \gg\}.
$$
The left center $Z_L(\gg)$ is an ideal in $\gg$ and we can consider the quotient of $\gg$ by $Z_L(\gg)$. By definition, $Z_L(\gg)$ is the kernel of the \textit{adjoint representation} $ad_L:\gg \to End(\gg), x \mapsto [x,-]$. Thus this quotient is isomorphic to a Lie subalgebra of $End(\gg)$. We denote this quotient by $\gg_0$. Hence, to a Leibniz algebra $\gg$ there is a canonical abelian extension given by
$$
Z_L(\gg) \fldri{i} \gg \fldrs{p} \gg_0.
$$
This extension gives a structure of $\gg_0$-representation to $Z_L(\gg)$, and by definition of $Z_L(\gg)$, this representation is anti-symmetric. The equivalence class of this extension is characterised by a cohomology class in $HL^{2}(\gg_0,Z_L(\gg))$. Hence there is $\omega \in ZL^{2}(\gg_0,Z_L(\gg))$ such that the abelian extension $Z_L(\gg) \fldri{i} \gg \fldrs{p} \gg_0$ is equivalent to 
$$
Z_L(\gg) \fldri{i} \gg_0 \oplus_{\omega} Z_L(\gg) \fldrs{\pi} \gg_0.
$$
Here $\gg_0$ is a Lie subalgebra of $End(\gg)$, so becomes integrated into a simply connected Lie subgroup $G_0$ of $Aut(\gg)$, and $Z_L(\gg)$ is an abelian Lie algebra, so becomes integrated into itself. $Z_L(\gg)$ is a $\gg_0$-representation (in the sense of Lie algebra) and $G_0$ is simply connected, thus by the Lie's second theorem, $Z_L(\gg)$ is a smooth $G_0$-module (in the Lie group sense) and we can provide $Z_L(\gg)$ with an anti-symmetric smooth $G_0$-module structure. The cocycle $\omega \in ZL^2(\gg,Z_L(\gg))$ becomes integrated into the local Lie rack cocycle $I^{2}(\omega) \in ZR_{p}^{2}(G_0,Z_L(\gg))_s$, and we can put on the cartesian product $G_0 \times Z_L(\gg)$ a structure of local Lie rack by setting
$$
(g,a) \conj (h,b) = (g \conj h,\phi_{g,h}(b) + \psi_{g,h}(a) + I^2(\omega)(g,h)),
$$ 
where $\phi_{g,h}(b) = g.b$ and $\psi_{g,h}(a) = 0$. That is we have
$$
(g,a) \conj (h,b) = (g \conj h, g.b + I^{2}(\omega)(g,h))).
$$
It is clear by construction that this local Lie rack has its tangent space at $1$ provided with a Leibniz algebra structure isomorphic to $\gg$. Finally, we have shown the following theorem.
\begin{theoreme} \label{theoreme_un}
Every Leibniz algebra $\gg$ can be integrated into a local Lie rack of the form 
$$
G_0 \times_{I^2(\omega)} \aa^{a},
$$
with conjugation
\begin{eqnarray}
(g,a) \conj (h,b) = (g \conj h, g.b + I^{2}(\omega)(g,h)), \label{conjugaison dans l'extension anti-symetrique}
\end{eqnarray}
and neutral element $(1,0)$, where $G_0$ is a Lie group, $\aa$ a $G_0$-module and $\omega \in ZL^2(\gg_0,\aa^{a})$. Conversely, every local Lie rack of this form has its tangent space at $1$ provides with a Leibniz algebra structure.
\end{theoreme}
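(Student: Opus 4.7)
The plan is to assemble the pieces developed in the previous subsections. Given a Leibniz algebra $\gg$, I would first form its canonical abelian extension
$$
Z_L(\gg) \fldri{i} \gg \fldrs{p} \gg_0
$$
where $\gg_0 = \gg/Z_L(\gg)$. Because $\gg_0$ is identified via $ad_L$ with a Lie subalgebra of $End(\gg)$ and $Z_L(\gg)$ is abelian, Lie's first and second theorems apply: $\gg_0$ integrates into a simply connected Lie subgroup $G_0$ of $Aut(\gg)$, the abelian algebra $Z_L(\gg)$ integrates to itself, and the anti-symmetric $\gg_0$-representation structure on $Z_L(\gg)$ lifts to a smooth $G_0$-module structure $\phi_{g,h}(b) = g.b$, $\psi_{g,h}(a) = 0$. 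This reduces the problem to integrating the Leibniz $2$-cocycle $\omega \in ZL^2(\gg_0,Z_L(\gg)^a)$ classifying the extension.

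Next I would invoke Proposition \ref{I^2 envoit cocyle sur cocycle} to produce the local Lie rack $2$-cocycle $I^2(\omega) \in ZR_p^2(U,Z_L(\gg)^a)_s$ on a suitable $1$-neighborhood $U$ in $G_0$. The candidate local Lie rack structure on $G_0 \times Z_L(\gg)$ is then defined by
$$
(g,a) \conj (h,b) = \big(g \conj h,\, g.b + I^2(\omega)(g,h)\big),
$$
with neutral element $(1,0)$. Checking that this is a local Lie rack reduces to three items: the normalisation $(g,a) \conj (1,0) = (1,0)$ and $(1,0) \conj (g,a) = (g,a)$ follow from $I^2(\omega)(g,1) = I^2(\omega)(1,g) = 0$ and from the module axioms; the self-distributivity identity is equivalent, after expansion, to the local rack $2$-cocycle identity for $I^2(\omega)$ combined with self-distributivity of $G_0$ and the module compatibility, which is exactly what Proposition \ref{I^2 envoit cocyle sur cocycle} gives; and the local bijectivity of $(g,a) \conj -$ follows from that of $g \conj -$ in $G_0$ and the fact that the second component is affine in $b$.

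For the identification of the tangent Leibniz algebra, I would differentiate the conjugation formula twice at $(1,0)$. The first component reproduces the bracket of $\gg_0$, while for the second component the term $g.b$ contributes $[x,b]_L$ and the term $I^2(\omega)(g,h)$ contributes $\delta^2(I^2(\omega))(x,y) = \omega(x,y)$ by Proposition \ref{I2 inverse � gauche de D2}. Combined with the anti-symmetry of the representation, this yields exactly the bracket of $\gg_0 \oplus_\omega Z_L(\gg)^a$, which is isomorphic to $\gg$. For the converse direction, Proposition \ref{l'espace tangent d'un rack local est Leibniz} directly provides the Leibniz algebra structure on $T_{(1,0)}(G_0 \times_{I^2(\omega)} \aa^a)$.

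The main obstacle is not really present at this stage, since the technical core, namely the existence of $I^2$ and the identity $\delta^2 \circ I^2 = id$, has been established in the previous subsection. The only subtle point to be careful about is the bookkeeping between the neighborhood $U$ on which $I^2(\omega)$ is defined, the domain $U_{2-loc}$ on which the rack cocycle identity holds, and the propagation to a domain in $G_0 \times Z_L(\gg)$ on which the self-distributivity axiom can be checked; this is a matter of restricting to a sufficiently small $1$-neighborhood, but one should state explicitly that the resulting structure is a local Lie rack in the sense of Section 2.4.
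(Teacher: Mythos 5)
Your proposal is correct and follows essentially the same route as the paper: the canonical abelian extension by the left center $Z_L(\gg)$, integration of $\gg_0 \subseteq End(\gg)$ into a simply connected $G_0 \subseteq Aut(\gg)$ with $Z_L(\gg)$ as an anti-symmetric smooth $G_0$-module via Lie's second theorem, integration of the classifying cocycle by $I^2$, and identification of the tangent Leibniz algebra via $\delta^2 \circ I^2 = id$. The extra verifications you spell out (normalisation, self-distributivity from the rack cocycle identity, and the domain bookkeeping for $U_{2-loc}$) are points the paper leaves implicit with "it is clear by construction", so your write-up is if anything more complete.
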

We ask more in our original problem. Indeed, we ask that, using the same procedure, a Lie algebra becomes integrated into a Lie group. That is, we have to show that when $\gg$ is a Lie algebra, then $G_0 \times Z_L(\gg)$ is provided with a Lie group structure, and the conjugation on $G_0 \times_{I^2(\omega)} Z_L(\gg)$ is induced by the rack product in $Conj(G_0 \times Z_L(\gg))$.
\vskip 0.2cm
Let $\gg$ be a Lie algebra, the left center $Z_L(\gg)$ is equal to the center $Z(\gg)$. The abelian extension $Z_L(\gg) \fldri{i} \gg \fldrs{p} \gg_0$ provides $Z_L(\gg)$ with an anti-symmetric structure but also a symmetric structure, so a trivial structure. This extension becomes a central extension and the cocycle $\omega \in ZL^2(\gg_0,Z(\gg))$ is also in $Z^{2}(\gg_0,Z(\gg))$. On the hand, with $\omega$ we can construct a local Lie rack cocycle $I^2(\omega)$, and on the other hand, we can construct a Lie group cocycle $\iota^2(\omega)$. Hence, using the formula (\ref{formule reliant I^{2} et iota^2}) relating $I^{2}(\omega)$ and $\iota^2(\omega)$, the conjugation in $G_0 \times_{I^2(\omega)} Z(\gg)$ can be written
$$
(g,a) \conj (h,b) = (g \conj h, I^2(\omega)(g,h)) = (g \conj h, \iota^2(\omega)(g,h) - \iota^2(\omega)(g \conj h,g)),
$$
and a easy calculation shows that this is the formula for the conjugation in the group $G_0 \times_{\iota^2(\omega)} Z(\gg)$, where the product is defined by
$$
(g,a)(h,b) = (gh, \iota^2(g,h)).
$$
\vskip 0.2cm
Conversely, suppose that a local Lie rack of the form $G_0 \times_{I^2(\omega)} \aa^{a}$ has its tangent space at $1$, $\gg_0 \oplus_{\omega} \aa^{a}$, provided with a Lie algebra structure. Necessarily, $\aa$ is a trivial $\gg_0$-representation and $\omega \in Z^2(\gg_0,\aa)$. Hence, as before we have the formula (\ref{formule reliant I^{2} et iota^2}) relating $I^2(\omega)$ and $\iota^2(\omega)$ and the conjugation defined by the formula $\eqref{conjugaison dans l'extension anti-symetrique}$ is induced by the conjugation coming from the group structure on $G_0 \times_{\iota^{2}(\omega)} \aa$. 
Finally, we have the following refinement of Theorem \ref{theoreme_un}.
\begin{theoreme} \label{theoreme principal}
Every Leibniz algebra $\gg$ can be integrated into a local Lie rack of the form 
$$
G_0 \times_{I^2(\omega)} \aa^{a},
$$
with conjugation
\begin{align}
(g,a) \conj (h,b) = (g \conj h, g.b + I^{2}(\omega)(g,h)), \label{conjugation} 
\end{align}
and neutral element $(1,0)$, where $G_0$ is a Lie group, $\aa$ a representation of $G_0$ and $\omega \in ZL^2(\gg_0,\aa^{a})$. Conversely, every local Lie rack of this form has its tangent space at $1$ provided with a Leibniz algebra structure.
\par
Moreover, in the special case where $\gg$ is a Lie algebra, the above construction provides $G_0 \times_{I^2(\omega)} \aa^a$ with a rack product coming from the conjugation in a Lie group. Conversely, if the tangent space at $1$ of $G_0 \times_{I^2(\omega)} \aa^{a}$ is a Lie algebra, then $G_0 \times_{I^2(\omega)} \aa^{a}$ can be provided with a Lie group structure, and the conjugation induced by the Lie group structure is the one defined by \eqref{conjugation}.
\end{theoreme}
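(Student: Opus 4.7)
The plan is to assemble the theorem from the pieces already established in the preceding sections, so that the proof is essentially a verification combined with the Lie-algebra-specific refinement. I would proceed as follows.

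First I would set up the integration. Given a Leibniz algebra $\gg$, consider the canonical abelian extension $Z_L(\gg) \fldri{i} \gg \fldrs{p} \gg_0$ with $\gg_0 = \gg/Z_L(\gg)$ a Lie subalgebra of $End(\gg)$ and $Z_L(\gg)$ an anti-symmetric $\gg_0$-representation. By the classical Lie theorems, $\gg_0$ integrates to a simply connected Lie subgroup $G_0$ of $Aut(\gg)$, and $Z_L(\gg)$ integrates trivially to itself as an abelian Lie group. By Lie's second theorem, the $\gg_0$-representation structure on $Z_L(\gg)$ lifts uniquely to a smooth $G_0$-module structure, which we equip with the anti-symmetric $G_0$-module structure of Section 2. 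Choosing a representative cocycle $\omega \in ZL^2(\gg_0, Z_L(\gg)^a)$ for the class of the extension and applying Proposition \ref{I2 inverse à gauche de D2}, we obtain a local Lie rack 2-cocycle $I^2(\omega) \in ZR_s^2(U,Z_L(\gg)^a)$, defined on a suitable $1$-neighbourhood $U$ of $G_0$. Then the formula \eqref{conjugation} defines a local rack product on $G_0 \times U$ containing $(1,0)$: the local rack identity is exactly the cocycle condition satisfied by $I^2(\omega)$ (Proposition \ref{I^2 envoit cocyle sur cocycle}), together with the $G_0$-module structure; the conditions $(1,0) \conj (h,b) = (h,b)$ and $(g,a) \conj (1,0) = (1,0)$ follow from $I^2(\omega)(1,h) = I^2(\omega)(g,1) = 0$.

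Next I would identify the tangent Leibniz algebra. Differentiating \eqref{conjugation} twice at $(1,0)$ yields on $\xx = \gg_0 \oplus Z_L(\gg)$ the bracket
\[
[(x,a),(y,b)] = ([x,y], x.b + \delta^2(I^2(\omega))(x,y)).
\]
By Proposition \ref{I2 inverse à gauche de D2}, $\delta^2(I^2(\omega)) = \omega$, so this is precisely the bracket of the central-type extension $\gg_0 \oplus_\omega Z_L(\gg)^a$, which by construction is isomorphic to $\gg$. This proves the direct part. The converse follows immediately from Proposition \ref{l'espace tangent d'un rack local est Leibniz}, since $G_0 \times_{I^2(\omega)} \aa^a$ is by construction a local Lie rack.

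For the refinement in the Lie algebra case, assume $\gg$ is a Lie algebra. Then $Z_L(\gg) = Z(\gg)$, the bracket of $\gg$ is anti-symmetric so $\omega$ lies in $ZL^2(\gg_0,Z(\gg)^a) \cap Z^2(\gg_0,Z(\gg))$, and $Z(\gg)$ is a trivial $\gg_0$-representation. Corollary \ref{cocycle de rack venant d'un cocycle de groupe} then gives
\[
I^2(\omega)(g,h) = \iota^2(\omega)(g,h) - \iota^2(\omega)(g \conj h, g),
\]
where $\iota^2(\omega)$ is a local Lie group $2$-cocycle. A direct computation shows that the local group product $(g,a)(h,b) = (gh, a+b+\iota^2(\omega)(g,h))$ on $G_0 \times_{\iota^2(\omega)} Z(\gg)$ has conjugation given exactly by \eqref{conjugation}, so the rack structure comes from a genuine local Lie group. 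Conversely, if the tangent Leibniz algebra of $G_0 \times_{I^2(\omega)} \aa^a$ happens to be a Lie algebra, then $\omega$ must be anti-symmetric and $\aa$ a trivial $\gg_0$-representation, so $\omega \in Z^2(\gg_0,\aa)$ and the same formula produces the integrating local Lie group. The main subtle step is the direction reusing Corollary \ref{cocycle de rack venant d'un cocycle de groupe}, since one must check that the candidate group product is associative (up to the local domain of definition) — but this is exactly the content of the Lie group $2$-cocycle identity satisfied by $\iota^2(\omega)$, already recalled in the Introduction. All remaining verifications are routine, and the principal conceptual load has been carried by Proposition \ref{I2 inverse à gauche de D2} and Corollary \ref{cocycle de rack venant d'un cocycle de groupe}.
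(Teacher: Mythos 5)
Your proposal is correct and follows essentially the same route as the paper: decompose $\gg$ as the abelian extension by the left center $Z_L(\gg)$, integrate $\gg_0$ and the module by the classical Lie theorems, integrate the cocycle via $I^2$ using Proposition \ref{I2 inverse � gauche de D2}, and treat the Lie-algebra case through Corollary \ref{cocycle de rack venant d'un cocycle de groupe}. Your explicit identification of the tangent bracket via $\delta^2\circ I^2=\mathrm{id}$ merely spells out what the paper leaves as ``clear by construction.''
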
 
\subsection{From Leibniz algebras to local augmented Lie racks}
Let $\gg_0$ be a Lie algebra, $\aa$ a $\gg$-representation and $\omega \in ZL^{2}(\gg_0,\aa^{a})$. In Proposition \ref{I^2 envoit cocyle sur cocycle}, we showed that $I^2(\omega)$ is a local Lie rack cocycle. We showed also that it satisfies the identity
\begin{eqnarray}
g.I^2(\omega)(h,k) - I^2(\omega)(gh,k) + I^2(\omega)(g, h \conj k) = 0 \label{cocycle augmented rack identity}
\end{eqnarray}
for all $(g,h,k) \in U_{3-loc}$. The natural question is to know which algebraic structure on $G_0 \times_{I^2(\omega)} \aa^{a}$ is encoded by this identity. We will see that the answer is the structure of a local augmented Lie rack.
\begin{definition}
Let $G$ be a group. A \textbf{local} $G$\textbf{-set} is a set $X$ provides with a map $\rho$ defined on a subset $\Omega$ of $G \times X$ with values in $X$ such that the followings axioms are satisfied
\begin{enumerate}
\item If $(h,x),(gh,x),(g,\rho(h,x)) \in \Omega$, then $\rho(g,\rho(h,x)) = \rho(gh,x)$.
\item For all $x \in X$, we have $(1,x) \in \Omega$ and $\rho(1,x) = x$.
\end{enumerate}
A \textbf{local topological} (resp.(\textbf{smooth})) $G$\textbf{-set} is a topological set (resp. smooth manifold) $X$ with a structure of a local $G$-set where $\Omega$ is an open subset of $X$ and $\rho : \Omega \to X$ is continuous (resp. smooth). A \textbf{fixed point} is an element $x_0 \in X$ such that for all $g \in G, (g,x_0) \in \Omega$ and $\rho(g,x_0) = x_0$. 
\end{definition}
In the following proposition, we show that the identity $\eqref{cocycle augmented rack identity}$ provides $G_0 \times_{I^2(\omega)} \aa^{a}$  with a structure of a local $G_0$-set.
\begin{proposition}
$G_0 \times_{I^2(\omega)} \aa^{a}$ is a local smooth $G_0$-set, and $(1,0)$ is a fixed point. 
\end{proposition}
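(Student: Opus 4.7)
The plan is to define an explicit candidate action $\rho : \Omega \to G_0 \times \aa$ and then verify the two axioms plus the fixed point property, with the associativity axiom being the substantive step that consumes the identity from Proposition \ref{identit� fant�me de I^2}. The natural guess, dictated by the shape of the conjugation formula \eqref{conjugation} and by the fact that $\psi_{g,h} = 0$ in the anti-symmetric case, is
$$
\rho\bigl(g,(h,b)\bigr) = \bigl(g \conj h,\ g.b + I^{2}(\omega)(g,h)\bigr),
$$
defined on the open subset $\Omega \subset G_0 \times (G_0 \times \aa)$ consisting of triples $(g,(h,b))$ such that $(g,h) \in U_{2-loc}$, so that $I^{2}(\omega)(g,h)$ makes sense.

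First I would check the easy axiom and the fixed point property. Setting $g = 1$ gives $\rho(1,(h,b)) = (1 \conj h,\ 1.b + I^{2}(\omega)(1,h)) = (h,b)$, using $1 \conj h = h$, the identity action of $1 \in G_0$ on $\aa$, and the normalisation $I^{2}(\omega)(1,h) = 0$ stated right after the definition of $I^2$. Similarly, $\rho(g,(1,0)) = (g \conj 1,\ g.0 + I^{2}(\omega)(g,1)) = (1,0)$, so $(1,0)$ is a fixed point. Smoothness of $\rho$ on $\Omega$ follows from smoothness of conjugation in $G_0$, of the $G_0$-action on $\aa$, and of $I^{2}(\omega)$ in a neighborhood of $(1,1)$; openness of $\Omega$ is inherited from openness of $U_{2-loc}$.

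The main step is associativity: I need to show that whenever $(g',(h,b))$, $(gg',(h,b))$ and $(g,\rho(g',(h,b)))$ all lie in $\Omega$, one has $\rho(g,\rho(g',(h,b))) = \rho(gg',(h,b))$. Unfolding the left hand side gives
\begin{align*}
\rho\bigl(g,\rho(g',(h,b))\bigr) &= \bigl(g \conj (g' \conj h),\ g.(g'.b) + g.I^{2}(\omega)(g',h) + I^{2}(\omega)(g, g' \conj h)\bigr),
\end{align*}
and the right hand side is $\bigl((gg') \conj h,\ (gg').b + I^{2}(\omega)(gg',h)\bigr)$. The first components agree because conjugation in $G_0$ satisfies $g \conj (g' \conj h) = (gg') \conj h$, and $g.(g'.b) = (gg').b$ since $\aa$ is a $G_0$-module. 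The equality of the remaining $\aa$-valued terms reduces precisely to
$$
g.I^{2}(\omega)(g',h) - I^{2}(\omega)(gg',h) + I^{2}(\omega)(g,g' \conj h) = 0,
$$
which is exactly the identity of Proposition \ref{identit� fant�me de I^2} applied to $(g,g',h) \in U_{3-loc}$.

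I expect no genuine obstacle here: the hard work has already been done in Proposition \ref{identit� fant�me de I^2}, and this proposition is essentially the statement that the \emph{ghost identity} satisfied by $I^{2}(\omega)$ is exactly a (local, twisted) action axiom. The only care to take is bookkeeping of the domain of definition, i.e.\ checking that the hypotheses required to invoke the ghost identity coincide with the hypotheses built into the definition of a local $G_0$-set, which is immediate from the definition of $U_{3-loc}$.
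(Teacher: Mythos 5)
Your proposal is correct and follows essentially the same route as the paper: the same candidate action $\rho(g,(h,b)) = (g \conj h,\, g.b + I^2(\omega)(g,h))$ on the same domain $\Omega$ defined by $(g,h) \in U_{2-loc}$, with the composition axiom reduced exactly to the identity $g.I^2(\omega)(h,k) - I^2(\omega)(gh,k) + I^2(\omega)(g,h\conj k) = 0$ established earlier, and the unit and fixed-point checks done via the normalisations $I^2(\omega)(1,h) = I^2(\omega)(g,1) = 0$. The only difference is notational.
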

\begin{proof} We define an open subset $\Omega$ and a smooth map $\rho$ by
\begin{enumerate}
\item $\Omega = \{(g,(h,b)) \in G_0 \times (G_0 \times _{I^2(\omega)} \aa^{a}) | (g,h) \in U_{2-loc}\}$.
\item $\rho(g,(h,b)) = (g \conj h, g.b + I^2(\omega)(g,h))$.
\end{enumerate} 
Let $(h,(k,z)), (gh,(k,z)), (g,\rho(h,(k,z))) \in \Omega$. This is equivalent to the condition $(h,k), (gh,k), (g,h \conj k) \in U_{2-loc}$, that is $(g,h,k) \in U_{3-loc}$. We have
\begin{align*}
\rho(g,\rho(h,(k,z))) &= (g \conj (h \conj k), g.(h.z) + g.I^2(\omega)(h,k) + I^{2}(\omega)(g, h \conj k)).
\end{align*}
Using the identities $\eqref{cocycle augmented rack identity}$	and $(gh) \conj k = g \conj (h \conj k)$, we have
\begin{align*}
\rho(g,\rho(h,(k,z))) &= ((gh) \conj k, (gh).z + I^2(\omega)(gh,k)) = \rho(gh,\rho(k,z)).  
\end{align*}
Moreover, $\rho(1,(k,z)) = (1 \conj k, 1.z + I^2(\omega)(1,k)) = (k,z)$ and $\rho(g,(1,0)) = (g \conj 1, g.0 + I^2(\omega)(g,1)) = (1,0)$. Hence $G_0 \times_{I^2(\omega)} \aa^{a}$ is a local smooth $G_0$-set and $(1,0)$ is a fixed point for this local action.
\end{proof}
We remark that we can reconstruct the rack product in $G_0 \times_{I^2(\omega)} \aa^{a}$ from the formula of the $G_0$-action. Indeed, we have $(g,a) \conj (h,b) = g.(h,b) = p(g,a).(h,b)$
where $p$ is the projection on the first factor $G_0 \times_{I^2(\omega)} \aa^{a} \fldrs{p} G_0$. Because $p(1,0) = 1$ and $p$ is equivariant we have shown the following proposition.
\begin{proposition} \label{G_0 x a$ est un rack augmenté}
$G_0 \times_{I^2(\omega)} \aa^{a} \fldrs{p} G_0$ is a local augmented Lie rack.
\end{proposition}
Hence we can rewrite our main theorem
\begin{theoreme}
Every Leibniz algebra $\gg$ can be integrated into a local augmented Lie rack of the form 
$$
G_0 \times_{I^2(\omega)} \aa^{a} \fldrs{p} G_0,
$$
with local action
$$
g.(h,b) = (g \conj h, g.b + I^{2}(\omega)(g,h)),
$$
and neutral element $(1,0)$, where $G_0$ is a Lie group, $\aa$ a representation of $G_0$ and $\omega \in ZL^2(\gg_0,\aa^{a})$. Conversely, every local augmented Lie rack of this form has its tangent space at $1$ provided with a Leibniz algebra structure.
\par
Moreover, in the special case where $\gg$ is a Lie algebra, the above construction provides $G_0 \times_{I^2(\omega)} \aa^a$ with a rack product coming from the conjugation in a Lie group. Conversely, if the tangent space at $1$ of $G_0 \times_{I^2(\omega)} \aa^{a}$ is a Lie algebra, then $G_0 \times_{I^2(\omega)} \aa^{a}$ can be provided with a Lie group structure, and the conjugation induced by the Lie group structure is the one defined by $\eqref{conjugation}$.
\end{theoreme}
\subsection{Example of a non-split Leibniz algebra integration}
In this section we construct the Lie rack associated to a Leibniz algebra of dimension 5 by following the method explained above. Other examples of integration in dimension 4 can be found in \cite{Covez_Thesis}. 
\vskip 0.2cm
Let $\gg = \R^5$. We define a bilinear map on $\gg$ by
\begin{align*}
[e_1,e_1] &= [e_1,e_2] = e_3 \\
[e_2,e_1] &= [e_2,e_2] = [e_1,e_3] = e_4 \\
[e_1,e_4] &= [e_2,e_3] = e_5
\end{align*}
We have $[(x_1,x_2,x_3,x_4),(y_1,y_2,y_3,y_4)] = (0,0,x_1(y_1 + y_2),x_2(y_1 + y_2) + x_1y_3,x_1y_4 + x_2y_3)$ and $(\mathfrak{g},[-,-])$ is a Leibniz algebra.
\par
%
%
To follow the integration method explained above, we have to determine the left center $Z_L(\gg)$, the quotient of $\gg$ by $Z_L(\gg)$ denoted $\gg_0$, the action of $\gg_0$ on $Z_L(\gg)$ and the Leibniz $2$-cocycle describing the abelian extension $Z_L(\gg) \fldri{} \gg \fldrs{} \gg_0$.
\vskip 0.2cm
Let $x \in Z_L(\gg)$, for $y=(0,0,1,0,0)$ in $\gg$, we have $[x,y] = 0$. This implies that $x_1 = x_2 = 0$. Conversely, every element in $\gg$ with the first two coordinates equal to $0$ is in $Z_L(\gg)$. Hence $Z_L(\gg) = < e_3,e_4,e_5 >$ and $\gg_0 \simeq < e_1,e_2>$. The bracket on $\gg_0$ is equal to zero, hence $\gg_0$ is an abelian Lie algebra. The action of $\gg_0$ on $ZL(\gg)$ is given by 
$$
\rho_x(y) = [(x_1,x_2,0,0,0),(0,0,y_3,y_4,y_5)] = (0,0,0,x_1y_3,x_1y_4+x_2y_3), 
$$
and the Leibniz $2$-cocycle is given by
$$
\omega(x,y) = [(x_1,x_2,0,0,0),(y_1,y_2,0,0,0)] = (0,0,x_1(y_1 + y_2),x_2(y_1 + y_2),0).
$$
\par
Moreover, we have $[x,x] = (0,0,x_1(x_1 + x_2),x_2(x_1+x_2)+x_1x_3,x_1x_4+x_2x_3)$, hence taking $x=(1,0,0,0,0),(0,1,0,0,0)$ and $(0,1,1,0,0)$, we see easily that $\gg_{ann} = Z_L(\gg)$. This Leibniz algebra is not split because for $\alpha \in Hom(\gg,Z_L(\gg))$ and $x,y \in \gg_0$, we have $d_L\alpha(x,y) = \rho_x(\alpha(y)) = (0,0,0,x_1\alpha(y)_3,x_1\alpha(y)_4+x_2\alpha(y)_3)$.\vskip 0.2cm
Now, we have to determine the Lie group $G_0$ associated to $\gg_0$, the action of $G_0$ on $Z_L(\gg)$ integrating $\rho:\gg_0 \to End(Z_L(\gg))$ (the action of $\gg_0$ on $Z_L(\gg)$), and the Lie rack cocycle integrating $\omega$.
\vskip 0.2cm
The Lie algebra $\gg_0$ is abelian, thus a Lie group integrating $\gg_0$ is $G_0 = \gg_0$. To integrate the action $\rho$, we use the exponential $\exp:End(Z_L(\gg)) \to Aut(Z_L(\gg))$. Indeed, for all $x \in \gg_0$, we have 
$$
\rho_x = \begin{pmatrix} 0 & 0 & 0 \\ x_1 & 0 & 0 \\ x_2 & x_1 & 0 \end{pmatrix}. 
$$
Hence, we define a Lie group morphism $\phi:G_0 \to Aut(Z_L(\gg))$ by setting
$$
\phi_x = \exp(\rho_x) = \begin{pmatrix} 1 & 0 & 0 \\ x_1 & 1 & 0 \\ x_2 + \frac{1}{2}x_1^2 & x_1 & 0 \end{pmatrix}.
$$
It is easy to see that $d_1\phi = \rho$. What remains to be done is the integration of the cocycle $\omega$. A formula for $f$, a Lie rack cocycle integrating $\omega$, is
$$
f(a,b) = \int_{\gamma_b}(\int_{\gamma_a} \tau^2(\omega)^{eq})^{eq},
$$
where $\gamma_{a}(s) = sa$ and $\gamma_b(t) = tb$. Let $a \in G_0$ and $x,y \in \gg_0$. We have
\begin{align*}
\int_{\gamma_a} \tau^2(\omega)^{eq} &= \int_{[0,1]} \tau^2(\omega)^{eq}(\gamma_a(s))(\dpart{s}\gamma_a(s)) ds \\
							    &= \int_{[0,1]} \phi_{\gamma_a(s)} \circ \tau^2(\omega)(a) ds 
\end{align*}
Thus $\displaystyle \int_{\gamma_a} \tau^2(\omega)^{eq} = \begin{pmatrix} a_1 & a_1 \\ \frac{1}{2}a_1^2 + a_2 & \frac{1}{2}a_1^2 + a_2 \\ a_1a_2 + \frac{1}{6} a_1^3 & a_1a_2 + \frac{1}{6} a_1^3 \end{pmatrix}$ and 
\begin{align*}
f(a,b) = \int_{\gamma_b} (\int_{\gamma_a} \tau^2(\omega)^{eq})^{eq}
	 = \int_{[0,1]} \phi_{\gamma_b(t)}(\begin{pmatrix} a_1 & a_1 \\ \frac{1}{2}a_1^2 + a_2 & \frac{1}{2}a_1^2 + a_2 \\ a_1a_2 + \frac{1}{6} a_1^3 & a_1a_2 + \frac{1}{6} a_1^3 \end{pmatrix}(b))  dt.
\end{align*}
We have 
\begin{align*}
\phi_{\gamma_b(t)}(\begin{pmatrix} a_1 & a_1 \\ \frac{1}{2}a_1^2 + a_2 & \frac{1}{2}a_1^2 + a_2 \\ a_1a_2 + \frac{1}{6} a_1^3 & a_1a_2 + \frac{1}{6} a_1^3 \end{pmatrix}(b)) &= \begin{pmatrix} 1 & 0 & 0 \\ tb_1 & 1 & 0 \\ tb_2 + \frac{1}{2}(tb_1)^2 & tb_1 & 0 \end{pmatrix}\begin{pmatrix} a_1 & a_1 \\ \frac{1}{2}a_1^2 + a_2 & \frac{1}{2}a_1^2 + a_2 \\ a_1a_2 + \frac{1}{6} a_1^3 & a_1a_2 + \frac{1}{6} a_1^3 \end{pmatrix}\begin{pmatrix} b_1 \\ b_2 \end{pmatrix} \\
															       &= \begin{pmatrix} a_1(b_1 + b_2) \\ (tb_1a_1 + a_2 + \frac{1}{2}a_1^2)(b_1 + b_2) \\ (a_1a_2 + \frac{1}{6}a_1^3 + \frac{1}{2}tb_1a_1^2 + tb_2a_1 + tb_1a_2 + \frac{1}{2}(tb_1)^2a_1)(b_1 + b_2) \end{pmatrix}. 
\end{align*}
Thus
$$
f(a,b) = \begin{pmatrix} a_1(b_1 + b_2) \\ (\frac{1}{2}b_1a_1 + a_2 + \frac{1}{2}a_1^2)(b_1 + b_2) \\ (a_1a_2 + \frac{1}{6}a_1^3 + \frac{1}{4}b_1a_1^2 + \frac{1}{2}b_2a_1 + \frac{1}{2}b_1a_2 + \frac{1}{6}(b_1)^2a_1)(b_1 + b_2) \end{pmatrix}. 
$$
and the conjugation in $G_0 \times_{f} Z_L(\gg) = \R^5$ is given by
\begin{align*}
\begin{pmatrix} a_1 \\ a_2 \\ a_3 \\ a_4 \\ a_5 \end{pmatrix} \conj \begin{pmatrix} b_1 \\ b_2 \\ b_3 \\ b_4 \\ b_5 \end{pmatrix} &= \begin{pmatrix} b_1 \\ b_2 \\ b_3 + a_1(b_1 + b_2) \\ a_1b_3 + b_4 + (\frac{1}{2}b_1a_1 + a_2 + \frac{1}{2}a_1^2)(b_1 + b_2) \\(a_2 + \frac{1}{2}a_1^2)b_3 + a_1b_4 + b_5 + (a_1a_2 + \frac{1}{6}a_1^3 + \frac{1}{4}b_1a_1^2 + \frac{1}{2}b_2a_1 + \frac{1}{2}b_1a_2 + \frac{1}{6}(b_1)^2a_1)(b_1 + b_2) \end{pmatrix}.
\end{align*}
%
\bibliographystyle{alpha}
\bibliography{bibliography}
\end{document}